\newcounter{mycount}
\theoremstyle{plain}
\newtheorem{theorem}[mycount]{Theorem}
\newtheorem{lemma}[mycount]{Lemma}
\theoremstyle{definition}
\newtheorem{definition}{Definition}
\theoremstyle{example}
\theoremstyle{remark}
\numberwithin{equation}{section}
\numberwithin{definition}{section}
\numberwithin{remark}{section}
\numberwithin{figure}{section}
\begin{document}

\title{Heaps and Two Exponential Structures}

\author{Emma Yu Jin$^{\dag}$}
\thanks{$^{\dag}$ Corresponding author email: jin@cs.uni-kl.de; yu.jin@tuwien.ac.at. The author is supported by the German Research Foundation DFG, JI
207/1-1 and the Austrian Research
Fund FWF, project SFB F50 Algorithmic and Enumerative Combinatorics.} \email{jin@cs.uni-kl.de;yu.jin@tuwien.ac.at}
\address{Institut f\"ur Diskrete Mathematik und Geometrie, TU Wien, Wiedner Hauptstr. 8--10, 1040 Vienna, Austria}

\maketitle

\begin{abstract}
Take ${\sf Q}=({\sf Q}_1,{\sf Q}_2,\ldots)$ to be an exponential structure and $M(n)$ to be the number of minimal elements of ${\sf Q}_n$ where $M(0)=1$.
Then a sequence of numbers $\{r_n({\sf Q}_n)\}_{n\ge 1}$ is defined by the
equation
\begin{eqnarray*}
\sum_{n\ge 1}r_n({\sf Q}_n)\frac{z^n}{n!\,M(n)}=-\log(\sum_{n\ge
0}(-1)^n\frac{z^n}{n!\,M(n)}).
\end{eqnarray*}
Let $\bar{{\sf Q}}_n$ denote the poset ${\sf Q}_n$ with a $\hat{0}$ adjoined and let $\hat{1}$ denote the unique maximal element in the poset ${\sf Q}_n$. Furthermore, let $\mu_{{\sf Q}_n}$ be the M\"{o}bius function on the poset $\bar{{\sf Q}}_n$. Stanley proved that $r_n({\sf Q}_n)=(-1)^n\mu_{{\sf Q}_n}(\hat{0},\hat{1})$. This implies that the numbers $r_n({\sf Q}_n)$ are integers. In this paper, we study
the cases ${\sf Q}_n=\Pi_n^{(r)}$ and ${\sf Q}_n={\sf Q}_n^{(r)}$ where $\Pi_n^{(r)}$ and ${\sf Q}_n^{(r)}$ are posets, respectively, of set partitions of $[rn]$ whose block sizes are divisible by $r$ and of $r$-partitions of $[n]$. In both cases we prove that $r_n(\Pi_n^{(r)})$ and $r_n({\sf Q}_n^{(r)})$ enumerate the pyramids by applying the Cartier-Foata monoid identity and further prove that $r_n(\Pi_n^{(r)})$ is the generalized Euler number $E_{rn-1}$ and that $r_n({\sf Q}_n^{(2)})$ is the number of complete non-ambiguous trees of size $2n-1$ by bijections. This gives a new proof of Welker's theorem that
$r_n(\Pi_n^{(r)})=E_{rn-1}$ and implies the construction of
$r$-dimensional complete non-ambiguous trees. As a bonus of applying
the theory of heaps, we establish a bijection between the set of complete
non-ambiguous forests and the set of pairs of permutations with no
common rise. This answers an open question raised by Aval {\it et
al.}.
\end{abstract}
\section{Introduction}
We denote by $\Pi_n$ the poset of all the set partitions of $[n]$
ordered by refinement, that is, define $\sigma\le\pi$ if every block
of $\sigma$ is contained in a block of $\pi$. Let $\rho\in\Pi_n$ be
the minimal element of $\Pi_n$, i.e.,
$\rho=\{\{1\},\{2\},\ldots,\{n\}\}$. Consider an interval
$[\sigma,\pi]$ in the poset $\Pi_n$ and suppose
$\pi=\{B_1,B_2,\ldots,B_k\}$ and $B_i$ is partitioned into
$\lambda_i$ blocks in $\sigma$. Then we have
$[\sigma,\pi]\cong\Pi_{\lambda_1}\times\Pi_{\lambda_2}\times\cdots\times
\Pi_{\lambda_k}$. For the particular case $\sigma =\rho$, we have
$[\rho,\pi]\cong\Pi_{\vert B_1\vert}\times\Pi_{\vert
B_2\vert}\times\cdots\times \Pi_{\vert B_k\vert}$. If we set
$a_j=\vert\{i:\lambda_i=j\}\vert$ for every $j$, then we can rewrite
\begin{eqnarray}\label{E:sigmapi}
[\sigma,\pi]\cong \Pi_1^{a_1}\times\Pi_2^{a_2}\times\cdots\times\Pi_n^{a_n}.
\end{eqnarray}
The poset $\Pi_n$ of set partitions is the archetype of exponential
structures. The concept of exponential structure was introduced by
Stanley as a generalization of compositional and exponential
formulas \cite{Stanley:ex1,Stanley:ec2,Ehrenborg:09}. An {\em
exponential structure} is a sequence ${\sf Q}=({\sf Q}_1,{\sf Q}_2,\ldots)$ of
posets such that:
\begin{enumerate}
\item for each $n\in\mathbb{N}^+$, the poset ${\sf Q}_n$ is finite, has a unique
maximal element $\hat{1}$ and every maximal chain of ${\sf Q}_n$ has $n$
elements.
\item for $\pi\in {\sf Q}_n$, the interval $[\pi,\hat{1}]$ is
isomorphic to the poset $\Pi_k$ of set partitions for some $k$.
\item the subposet
$\Lambda_\pi=\{\sigma\in {\sf Q}_n:\sigma\le \pi\}$ of ${\sf Q}_n$ is isomorphic
to ${\sf Q}_1^{a_1}\times {\sf Q}_2^{a_2}\times\cdots\times {\sf Q}_n^{a_n}$ for
unique $a_1,a_2,\ldots,a_n\in\mathbb{N}$.
\end{enumerate}
Suppose $\pi\in {\sf Q}_n$ and $\rho$ is a minimal element of ${\sf Q}_n$
satisfying $\rho\le\pi$. By $(1)$ and $(2)$, we obtain that $[\rho,\hat{1}]\cong\Pi_n$. It follows from (\ref{E:sigmapi})
that $[\rho,\pi]\cong \Pi_1^{a_1}\times
\Pi_2^{a_2}\times\cdots\times \Pi_n^{a_n}$ for unique
$a_1,a_2,\ldots,a_n\in\mathbb{N}$ satisfying $\sum_{i}ia_i=n$ and
$\sum_{i}a_i=\vert\pi\vert$. In particular, if $\rho_1$ is another
minimal element of ${\sf Q}_n$ satisfying $\rho_1\le \pi$, then we have
$[\rho_1,\pi]\cong [\rho,\pi]$.

We will define the numbers $r_n({\sf Q}_n)$ associated with an exponential
structure ${\sf Q}=({\sf Q}_1,{\sf Q}_2,\ldots)$ in the following way. Let
$M(n)$ be the number of minimal elements of ${\sf Q}_n$ for $n\ge 1$ and set $M(0)=1$. Then a sequence of numbers $\{r_n({\sf Q}_n)\}_{n\ge 1}$ is
defined by the equation
\begin{eqnarray}\label{E:defr}
\sum_{n\ge 1}r_n({\sf Q}_n)\frac{z^n}{n!\,M(n)}=-\log(\sum_{n\ge
0}(-1)^n\frac{z^n}{n!\,M(n)}).
\end{eqnarray}
Let $\bar{{\sf Q}}_n$ denote the poset ${\sf Q}_n$ with a $\hat{0}$ adjoined and let $\hat{1}$ denote the unique maximal element in the poset ${\sf Q}_n$. Furthermore, let $\mu_{{\sf Q}_n}$ be the M\"{o}bius function on the poset $\bar{{\sf Q}}_n$.
Then from Chapter 5.5 of \cite{Stanley:ec2}, we know
\begin{eqnarray}\label{E:Qexp}
\sum_{n\ge
1}\mu_{{\sf Q}_n}(\hat{0},\hat{1})\frac{z^n}{n!M(n)}=-\log(\sum_{n\ge
0}\frac{z^n}{n!M(n)}),
\end{eqnarray}
and thus, $r_n({\sf Q}_n)=(-1)^n\mu_{{\sf Q}_n}(\hat{0},\hat{1})$. This
implies that the numbers $r_n({\sf Q}_n)$ are integers for any exponential
structure ${\sf Q}=({\sf Q}_1,{\sf Q}_2,\ldots)$. In the case ${\sf Q}_n=\Pi_n$, the
number $M(n)$ of minimal elements in the poset $\Pi_n$ is $1$. It follows immediately from (\ref{E:Qexp}) and (\ref{E:defr}) that $r_n(\Pi_n)=\mu_{\Pi_n}(\hat{0},\hat{1})=0$ for $n\ge 2$ and $r_1(\Pi_1)=-\mu_{\Pi_1}(\hat{0},\hat{1})=1$. There are three other examples of exponential structures ${\sf Q}=({\sf Q}_1,{\sf Q}_2,\ldots)$ in \cite{Stanley:ex1,Stanley:ec2}.
\begin{enumerate}
\item ${\sf Q}_n={\sf Q}_n(q)$ which is the poset of direct sum decompositions
of the $n$-dimensional vector space $V_n(q)$ over the finite field
$\mathbb{F}_{q}$. Let $V_n(q)$ be an $n$-dimensional vector space over the finite field $\mathbb{F}_q$. Let ${\sf Q}_n(q)$ consist of all the collections $\{W_1,W_2,\ldots,W_k\}$ of subspaces of $V_n(q)$ such that $\dim W_i>0$ for all $i$, and such that $V_n(q)=W_1\oplus W_2\oplus \cdots \oplus W_k$ (direct sum). An element of ${\sf Q}_n(q)$ is called a {\em direct sum decomposition} of $V_n(q)$. We order ${\sf Q}_n(q)$ by refinement, i.e., $\{W_1,W_2,\ldots,W_k\}\le \{W_1',W_2',\ldots,W_j'\}$ if each $W_r$ is contained in some $W_s'$.
\item ${\sf Q}_n=\Pi_{n}^{(r)}$ which is the poset of set partitions of $[rn]$ whose block sizes are divisible by $r$.
\item ${\sf Q}_n={\sf Q}_n^{(r)}$ which is the poset of $r$-partitions of $[n]$. The definition of $r$-partition will be given in Section~\ref{S:rpar}.
\end{enumerate}
The poset ${\sf Q}_n(q)$ had been studied by Welker \cite{Welker} who used the theory of free monoid to give an expression of the M\"{o}bius
function $\mu_{{\sf Q}_n(q)}(\hat{0},\hat{1})$; see Theorem 4.4 in \cite{Welker}.

Here we prove that the number $r_n(\Pi_n^{(r)})$ is the generalized Euler
number $E_{rn-1}$ and $r_n({\sf Q}_n^{(2)})$ is the number of complete non-ambiguous trees of size $2n-1$.
First, we show that $r_n(\Pi_n^{(r)})$ and $r_n({\sf Q}_n^{(r)})$ enumerate the pyramids by applying the Cartier-Foata monoid identity on the respective posets $\Pi_n^{(r)}$
and ${\sf Q}_n^{(r)}$. Secondly, we build a bijection between the set of the
pyramids counted by $r_n(\Pi_n^{(r)})$ and the set of permutations
of $[rn-1]$ with descent set $\{r,2r,\ldots,rn-r\}$. This gives a new
proof of Welker's theorem that $r_n(\Pi_n^{(r)})=E_{rn-1}$. Welker
\cite{Welker} proved that $r_n(\Pi_n^{(r)})=\vert\mu_{\Pi_n^{(r)}}(\hat{0},\hat{1})\vert
=E_{rn-1}$ by counting the number of descending chains in the chain
lexicographic shellable poset $\Pi_n^{(r)}$. Thirdly, we establish
a bijection between the set of the pyramids counted by $r_n({\sf Q}_n^{(2)})$ and the
set of complete non-ambiguous trees of size $2n-1$. This implies the construction of $r$-dimensional complete non-ambiguous trees. As a bonus of applying the theory of heaps, we provide a bijection between the set of complete
non-ambiguous forests and the set of pairs of permutations with no
common rise. This answers an open question raised by Aval {\it et al.}.

This paper is organized as follows. In section~\ref{S:hmFoata}, we introduce the terms heap, pyramid and the Cartier-Foata identity. In
sections~\ref{S:parde} and \ref{S:rpar}, we prove our main results. That is, that
$r_n(\Pi_n^{(r)})$ is the generalized Euler numbers
$E_{rn-1}$ and $r_n({\sf Q}_n^{(2)})$ is the number of complete
non-ambiguous trees of size $2n-1$. In subsection~\ref{S:forAval}, we establish a
bijection between the set of complete non-ambiguous forests and the
set of pairs of permutations with no common rise.
\section{Heap, Monoid and the Cartier-Foata identity}\label{S:hmFoata}
The theory of heaps was introduced by Viennot to interpret the
elements of the Cartier-Foata monoid in a geometric manner; see
\cite{Viennot,Kratthen,Vonline}. Here we will adopt the notations
from \cite{Viennot,Kratthen}. Let $\CMcal{B}$ be a set of pieces
with a symmetric and reflexive binary relation $\CMcal{R}$, i.e., we have
$a\CMcal{R}b\Leftrightarrow b\CMcal{R}a$ and $a\CMcal{R}a$ for every
$a,b\in \CMcal{B}$. We will define a {\em heap} as follows:
\begin{definition}\label{D:heap}
A {\em heap} is a triple $H=(P,\le,\varepsilon)$ where
$(P,\le)$ is a finite poset and $\varepsilon$ is a map $\varepsilon:P\mapsto \CMcal{B}$ satisfying:
\begin{enumerate}
\item for every $x,y\in P$, if $\varepsilon(x)\CMcal{R}\varepsilon(y)$,
then $x$ and $y$ are comparable, i.e., $x\le y$ or $y\le x$.
\item for every $x,y\in P$, if $y$ covers $x$
(i.e., $x\le y$ and for any $z$ such that $x\le z\le y$, $z=x$ or
$z=y$), then $\varepsilon(x)\CMcal{R}\varepsilon(y)$.
\end{enumerate}
\end{definition}
We represent the heap $H=(P,\le,\varepsilon)$ by the Hasse diagram of the poset $(P,\le)$ where every element $x$ is labelled by $\varepsilon(x)$, and we will give an example of heap from \cite{Viennot}. \\[7pt]
{\bf Example $1$.} Let $\CMcal{B}_1=\mathbb{Z}$ be a set of integers with a binary relation $\CMcal{R}_1$ defined by: $i\CMcal{R}_1 j$ if and only if $\vert i-j\vert\le 1$ for every $i,j\in \CMcal{B}_1$. The poset $(P_1,\le)$ is defined by its Hasse diagram in Figure~\ref{F:1} and the map $\varepsilon: P_1\mapsto \CMcal{B}_1$ is defined as such: if $\alpha\in P_1$ lies on the vertical line $x=i$, then $\varepsilon(\alpha)=i$. See Figure~\ref{F:1}. It is clear that $(P_1,\le,\varepsilon)$ satisfies conditions $(1)$ and $(2)$ in Definition~\ref{D:heap}.\\[7pt]
{\bf Remark $1$.} A heap $H=(P,\le,\varepsilon)$ is equivalent to the directed graph given as we will now state. Let $G(P)$ be the graph endowed with an orientation $\gamma$ whose vertex set is $P$ and such that there is an oriented edge from $x$ to $y$ if and only if $y<x$ in the poset $P$. Since $\le$ is an order relation in the poset $(P,\le)$, $\gamma$ is acyclic. Then the heap $(P,\le,\varepsilon)$ is equivalent to the directed graph $(G(P),\gamma)$.
\begin{figure}[htbp]
\begin{center}
\includegraphics[scale=0.5]{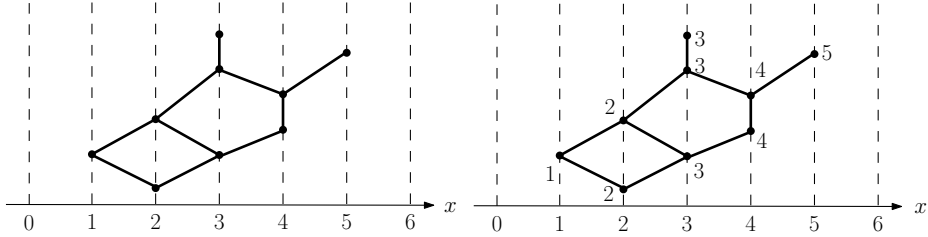}
\caption{The poset $(P_1,\le)$ (left) and the heap $H_1=(P_1,\le,\varepsilon)$ of integers (right).
\label{F:1}}
\end{center}
\end{figure}

We denote $H=\varnothing$ if the heap $H$ is empty. To clarify our subsequent discussion on the set of heaps,
we need to first review some definitions.
\begin{definition}\label{D:monoid}
A {\em monoid} $S$ is a set
that is closed under an associative binary operation $\circ$ and has
an identity element ${\rm id}\in S$ such that for all $s\in S$, we have ${\rm id} \circ s=s\circ{\rm id}=s$.
\end{definition}
\begin{definition}\label{D:tranclosure}
The {\em transitive closure} of a binary relation $\CMcal{R}$ on a set $X$ is the transitive relation $\CMcal{R}^*$ on the set $X$ such that $\CMcal{R^*}$ is minimal and contains $\CMcal{R}$.
\end{definition}
Let $\CMcal{H}(\CMcal{B},\CMcal{R})$ be the set of heaps $H=(P,\le,\varepsilon)$ given in Definition \ref{D:heap}, in particular, include $\varnothing\in \CMcal{H}(\CMcal{B},\CMcal{R})$. Then the set $\CMcal{H}(\CMcal{B},\CMcal{R})$ of heaps can be equipped with a monoid structure as follows:
\begin{definition}\label{D:circ}
Suppose $H_1=(P_1,\le_1,\varepsilon_1)$,
$H_2=(P_2,\le_2,\varepsilon_2)\in \CMcal{H}(\CMcal{B},\CMcal{R})$, and the composition $H_1\circ H_2=(P_3,\le_3,\varepsilon_3)$ is the heap defined by the following:
\begin{enumerate}
\item
$P_3$ is the disjoint union of $P_1$ and $P_2$.
\item
$\varepsilon_3$ is the unique map $\varepsilon_3: P_3\mapsto \CMcal{B}$ such that $\varepsilon_3=\varepsilon_i$ if we restrict $P_3$ to $P_i$ for $i=1,2$.
\item
The partial order $\varepsilon_3$ is the transitive closure of the following
relation $\CMcal{R}^*$. For $x,y\in P_3$, $x\CMcal{R}^* y$ if and only if one of $(a),(b),(c)$ is satisfied.
\begin{enumerate}
\item $x\le_1 y$ and $x,y\in P_1$
\item $x\le_2 y$ and $x,y\in P_2$
\item $x\in P_1,y\in P_2$ and
$\varepsilon_1(x)\CMcal{R}\varepsilon_2(y)$.
\end{enumerate}
\end{enumerate}
\end{definition}
Simply put, the heap $H_1\circ H_2$ is obtained by putting the pieces in $H_2$ on top of $H_1$ in its Hasse diagram representation. See Figure~\ref{F:2}.
\begin{figure}[htbp]
\begin{center}
\includegraphics[scale=0.5]{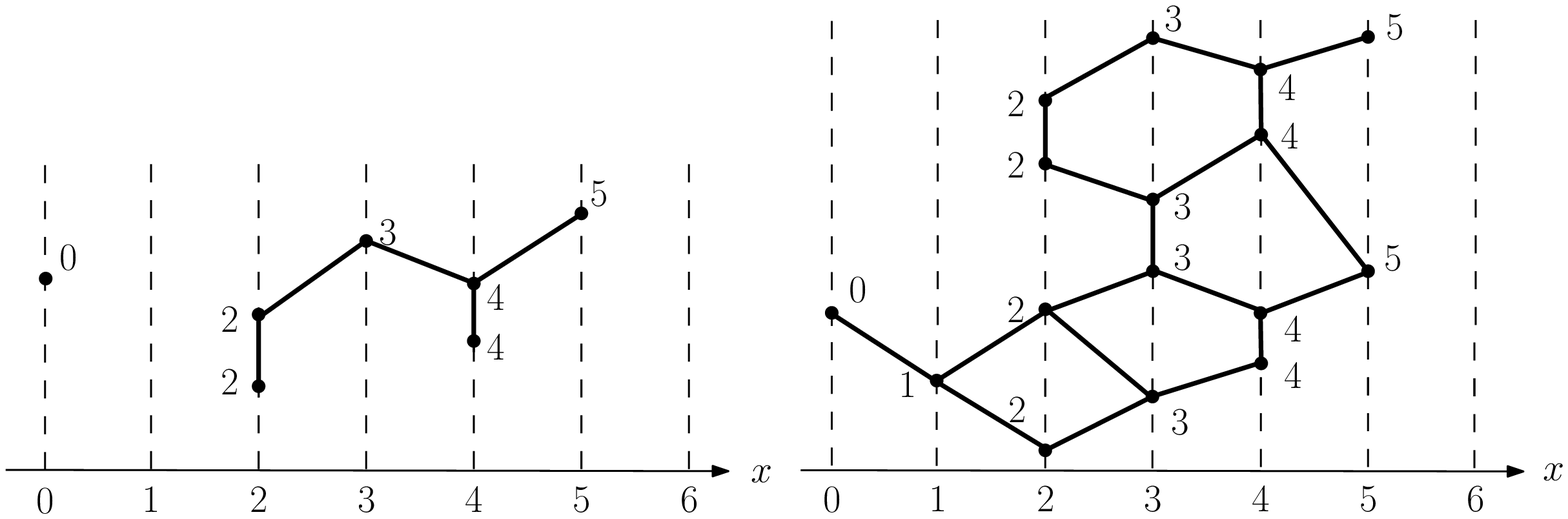}
\caption{The heap $H_2$ of integers (left) and the composition $H_1\circ H_2$ (right) where $H_1$ is given in Figure~\ref{F:1}.
\label{F:2}}
\end{center}
\end{figure}
By Definition~\ref{D:circ}, the set $\CMcal{H}(\CMcal{B},\CMcal{R})$ is closed under the associative composition $\circ$ and the identity element ${\rm id}$ is the empty heap $\varnothing$ from which it follows that the set $\CMcal{H}(\CMcal{B},\CMcal{R})$ is a monoid. The size of a heap $H$, denoted by $\vert H \vert$, is the number of pieces in the heap $H$.
\begin{definition}\label{D:triheap}
A {\em trivial heap} $T$ is a heap consisting of pieces
that are pairwise unrelated, i.e.,
$\varepsilon(x)\centernot{\CMcal{R}}\varepsilon(y)$ for every distinct $x,y$ in $T$.
\end{definition}
Let $\CMcal{T}(\CMcal{B},\CMcal{R})$ be the set of trivial heaps
contained in the set $\CMcal{H}(\CMcal{B},\CMcal{R})$. Let
$\mathbb{Z}[[\CMcal{H}(\CMcal{B},\CMcal{R})]]$ be the ring of
formal power series in $\CMcal{H}(\CMcal{B},\CMcal{R})$ with
coefficients in the commutative ring $\mathbb{Z}$. Then the
Cartier-Foata identity in the ring
$\mathbb{Z}[[\CMcal{H}(\CMcal{B},\CMcal{R})]]$ is
\begin{equation}\label{E:CF1}
\sum_{\substack{H\in \CMcal{H}(\CMcal{B},\CMcal{R})\\
T\in \CMcal{T}(\CMcal{B},\CMcal{R})}}(-1)^{\vert T\vert}\cdot (\,T\circ
H)=\varnothing.
\end{equation}
\begin{definition}\label{D:CFmonoid}
Consider the set of letters $S=\{x_a:a\in\CMcal{B}\}$ and the set
$${\sf R}=\{x_ax_b=x_bx_a,\mbox{ for }\, a,b\in\CMcal{B}\,\mbox{ such that } a\centernot{\CMcal{R}}b\}$$ of relations. Then the {\em Cartier-Foata monoid} is the set $\CMcal{M}(\CMcal{B},\CMcal{R})$ of equivalent classes of words of $S$ subject to the relations in ${\sf R}$.
\end{definition}
In other words, two words are equivalent if we can transform one into another by applying successive relations of ${\sf R}$. The Cartier-Foata monoid
$\CMcal{M}(\CMcal{B},\CMcal{R})$ is isomorphic to the monoid
$\CMcal{H}(\CMcal{B},\CMcal{R})$. Here we only show the isomorphism
$\varphi:\CMcal{H}(\CMcal{B},\CMcal{R})\rightarrow
\CMcal{M}(\CMcal{B},\CMcal{R})$ without the proof. We refer the readers to the Proposition 3.4 in \cite{Viennot} for a detailed proof that $\varphi$ is an isomorphism.

Given a heap $H=(P,\le,\varepsilon)\in \CMcal{H}(\CMcal{B},\CMcal{R})$ of size $n$, let $\sigma:P\mapsto [n]$ be a bijection such that $\sigma(a)\le \sigma(b)$ for every $a,b\in P$ where $a\le b$, and define a word
$w_{\sigma}=x_{m_1}x_{m_2}\cdots x_{m_n}$ where $m_i=\varepsilon(\sigma^{-1}(i))\in\CMcal{B}$. Here we call the bijection $\sigma$ a {\em natural labeling} (sometimes also called a {\em linear extension}) of the poset $(P,\le)$. For
any two different natural labelings $\sigma,\theta$ of the poset
$(P,\le)$, it was proven in \cite{Viennot} that $w_\sigma=w_\theta$ in the monoid $\CMcal{M}(\CMcal{B},\CMcal{R})$. In view of this, we set $\varphi(H)=w_{\sigma}$ and, thus, $\varphi(H)$ is well-defined.

In particular, we have $\varphi(\varnothing)=1$ and, for any two heaps
$H_1,H_2\in \CMcal{H}(\CMcal{B},\CMcal{R})$, we have
$\varphi(H_1\circ H_2)=\varphi(H_1)\cdot \varphi(H_2)$. Namely, under the isomorphism $\varphi$, the empty heap $\varnothing$ corresponds to
the identity word $1$ and the composition of two heaps $H_1\circ H_2$
corresponds to the product of two words $\varphi(H_1)$ and $\varphi(H_2)$ by
juxtaposition.\\[7pt]
{\bf Example $2$.} We continue using the heap $H_1=(P_1,\le,\varepsilon)$ in Example $1$ to show the isomorphism $\varphi$. We start labeling the minimal elements of $(P_1,\le)$ with integers $1,2,\ldots$ from left to right, then we remove these minimal elements from the poset $(P_1,\le)$ and label the minimal elements of the remaining poset by increasing integers from left to right.
We continue this process until all the elements in the poset $(P_1,\le)$ are labeled. It is clear this labeling process is a natural labeling of the poset $(P_1,\le)$ which gives us the corresponding word $\varphi(H_1)=x_2x_1x_3x_2x_4x_4x_3x_5x_3$ in the monoid $\CMcal{M}(\CMcal{B}_1,\CMcal{R}_1)$. Similarly, we get the corresponding words for the heaps $H_2$ and $H_1\circ H_2$ in Figure~\ref{F:2}, which are $\varphi(H_2)=x_0x_2x_4x_2x_4x_3x_5$ and $\varphi(H_1\circ H_2)=x_2x_1x_3x_0x_2x_4x_4x_3x_5x_3x_2x_4x_2x_4x_3x_5$. It is easy to verify that
$\varphi(H_1\circ H_2)=\varphi(H_1)\cdot\varphi(H_2)$ because $x_0$ commutes with every $x_i$ for $i\ge 2$.\\[7pt]
Let $\mathbb{Z}[[\CMcal{M}(\CMcal{B},\CMcal{R})]]$ be the ring of formal power series in $\CMcal{M}(\CMcal{B},\CMcal{R})$ with
coefficients in the commutative ring $\mathbb{Z}$. Then we can express ~(\ref{E:CF1}) in the ring
$\mathbb{Z}[[\CMcal{M}(\CMcal{B},\CMcal{R})]]$ as follows:
\begin{equation}\label{E:CF2}
\sum_{\substack{H\in \CMcal{H}(\CMcal{B},\CMcal{R})\\
T\in \CMcal{T}(\CMcal{B},\CMcal{R})}}(-1)^{\vert T\vert}
\cdot(\varphi(T)\cdot \varphi(H))=1.
\end{equation}
\begin{definition}
A {\em pyramid} is a heap with exactly one maximal element.
\end{definition}
Let $\CMcal{P}(\CMcal{B},\CMcal{R})$ denote the set of pyramids consisting of pieces in $\CMcal{B}$. Then, according to the exponential formula for the unlabeled combinatorial objects \cite{Stanley:ec2},
\begin{equation}\label{E:exph}
\sum_{H\in
\CMcal{H}(\CMcal{B},\CMcal{R})}\varphi(H)=_{\mbox{\scriptsize{comm}}}
\exp(\sum_{P\in
\CMcal{P}(\CMcal{B},\CMcal{R})}\frac{\varphi(P)}{\vert P\vert}).
\end{equation}
where $=_{\mbox{\scriptsize{comm}}}$ means the identity holds in the commutative extension of $\CMcal{H}(\CMcal{B},\CMcal{R})$, that is, in the commutative monoid which arises from $\CMcal{H}(\CMcal{B},\CMcal{R})$ by letting all pieces in $\CMcal{B}$ commute. In combination of (\ref{E:CF2}), it follows that
\begin{equation}\label{E:logpy}
\sum_{P\in \CMcal{P}(\CMcal{B},\CMcal{R})}\frac{\varphi(P)}{\vert P\vert} =_{\mbox{\scriptsize{comm}}} -\log(\sum_{T\in
\CMcal{T}(\CMcal{B},\CMcal{R})}(-1)^{\vert T\vert}\varphi(T)).
\end{equation}
We will mainly use (\ref{E:logpy}) to connect the numbers $r_n(\Pi_n^{(r)})$ and $r_n({\sf Q}_n^{(r)})$ with the pyramids in Section~\ref{S:parde} and Section~\ref{S:rpar}. It is worthwhile to mention that via (\ref{E:logpy}) Josuat-Verg\`{e}s had proved that a new sequence enumerates the pyramids which are related to the perfect matchings in \cite{Josuat:12}. With a slight abuse of notation, we simply use the labels $\varepsilon(x)$ in the heap $H=(P,\le,\varepsilon)$ to represent the elements $x\in P$ in the heap and we use $H=(P,\le)$ to denote the heap $H=(P,\le,\varepsilon)$.

\section{On the poset $\Pi_n^{(r)}$}\label{S:parde}
The poset of the set partitions of $[rn]$ whose block sizes are divisible by $r$ is $\Pi_n^{(r)}$ and its elements are ordered by refinement. In particular, $\Pi_n^{(1)}=\Pi_n$. Let $\Pi_{n,r}$ be the set of all the set partitions of $[rn]$ whose block sizes
are exactly $r$. Then $\Pi_{n,r}$ is the set of minimal elements of the poset $\Pi_n^{(r)}$ and, consequently, $M(n)=\vert \Pi_{n,r}\vert=(rn)!(n!r!^n)^{-1}$. For $r\ge 2$, the sequence $\{r_n(\Pi_n^{(r)})\}_{n\ge 1}$ defined by (\ref{E:defr}) satisfies
\begin{eqnarray}\label{E:rQ2}
\sum_{n\ge
1}\frac{r_n(\Pi_n^{(r)})z^{rn}}{(rn)!}=-\log(\sum_{n\ge
0}(-1)^n\frac{z^{rn}}{(rn)!}).
\end{eqnarray}
Let $\mathfrak{S}_n$ be the set of permutations of $[n]$. For $\pi\in\mathfrak{S}_n$, we write $\pi=a_1a_2\ldots a_n$ if $\pi(j)=a_j$. Then the descent set of permutation $\pi=a_1a_2\ldots a_n\in\mathfrak{S}_n$ is $\mbox{Des}\,\pi=\{i: 1\le i<n\, \mbox{ and } a_i>a_{i+1}\}$. The generalized Euler number $E_{rn-1}$ counts the number of permutations $\pi\in\mathfrak{S}_{rn-1}$ such that $\mbox{Des}\,\pi=\{r,2r,\ldots,rn-r\}$.
One of our main results is a new proof
of the following result due to Welker \cite{Welker}:
\begin{theorem}\label{T:Eulercm}
$r_n(\Pi_n^{(r)})=E_{rn-1}$.
\end{theorem}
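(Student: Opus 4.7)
The plan is to prove $r_n(\Pi_n^{(r)})=E_{rn-1}$ in two stages, as indicated in the introduction. The first stage reinterprets $r_n(\Pi_n^{(r)})$ as the cardinality of a set of pyramids by applying the Cartier-Foata monoid identity to $\Pi_n^{(r)}$; the second stage exhibits a bijection between these pyramids and the permutations of $[rn-1]$ with descent set $\{r,2r,\ldots,rn-r\}$, which are by definition enumerated by $E_{rn-1}$.

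For the first stage I would build a Cartier-Foata monoid $\CMcal{M}(\CMcal{B},\CMcal{R})$ tailored to $\Pi_n^{(r)}$: the pieces in $\CMcal{B}$ (suitably weighted) and the relation $\CMcal{R}$ should be chosen so that the alternating trivial-heap sum on the right-hand side of (\ref{E:logpy}) specializes to the series $\sum_{n\ge 0}(-1)^n z^{rn}/(rn)!$ appearing on the right-hand side of (\ref{E:rQ2}). Matching the two logarithmic identities then identifies $r_n(\Pi_n^{(r)})$ with the number of pyramids of size $n$ in this monoid.

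For the second stage I would construct an explicit bijection. A permutation $\pi\in\mathfrak{S}_{rn-1}$ with $\mbox{Des}\,\pi=\{r,2r,\ldots,rn-r\}$ decomposes into $n-1$ maximal increasing runs of length $r$ followed by a final run of length $r-1$; appending the maximum element $rn$ completes this last run to length $r$, yielding an ordered partition of $[rn]$ into $n$ blocks of size $r$, denoted $B_1,B_2,\ldots,B_n$. From this data one assembles an associated pyramid whose pieces and partial order are dictated by the Cartier-Foata setup of the first stage. In the reverse direction, a pyramid of size $n$ is flattened by choosing a canonical linear extension—for instance, by iteratively peeling off the minimal piece with the smallest least-element—writing each piece's contribution in a prescribed increasing manner, and then deleting the entry $rn$ to recover the permutation.

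The central obstacle is verifying that this construction is genuinely a bijection that respects the descent structure. One must check that (a) the canonical linear extension is well-defined with no tie-breaking ambiguity, so that the forward map produces a unique permutation; (b) no descent appears within a single block, while a descent appears at each block boundary, so that the descent set is exactly $\{r,2r,\ldots,rn-r\}$; and (c) the pyramid's partial order can be reconstructed unambiguously from the permutation. The delicate alignment between the $\CMcal{R}$-relation of the Cartier-Foata setup and the descent pattern of $\pi$ is the crux of the argument, and is precisely what replaces Welker's EL-shellability reasoning from \cite{Welker} with heap-theoretic combinatorics.
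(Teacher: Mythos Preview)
Your two-stage plan matches the paper's (Lemma~\ref{L:cirn} for stage one, Lemma~\ref{L:bijrn} for stage two), and decomposing the permutation into the increasing runs $B_1,\ldots,B_n$ is exactly the right first move. The gap is in your proposed inverse. Peeling off the minimal piece with the \emph{smallest least element} does not recover the original block order. Take $r=2$, $n=3$, $\pi=45231$: the blocks are $B_1=\{4,5\}$, $B_2=\{2,3\}$, $B_3=\{1,6\}$; since $B_1$ and $B_2$ do not cross, the associated pyramid has them as incomparable minimal pieces beneath $B_3$. Your rule peels $B_2$ first (its least element $2$ beats $4$), producing the word $23451$, whose descent set is $\{4\}$, not $\{2,4\}$. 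So the map you describe is not even into the correct set of permutations.

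The reason is structural: the descent at position $ri$ forces $\max(B_i)>\min(B_{i+1})$, so whenever $B_i$ and $B_{i+1}$ commute (fail to cross) it is because $\max(B_{i+1})<\min(B_i)$. Commuting consecutive blocks are therefore ordered by \emph{decreasing maximum}, not increasing minimum, and your tie-breaker points the wrong way. A rule that at each step removes the minimal piece with the \emph{largest maximum} does reconstruct $B_1,\ldots,B_n$, and one can prove this uniformly. The paper instead works top-down and recursively: it locates the largest integer $m$ lying outside the tail of the maximal block $B_n$, splits the ordered block list at the block containing $m$, and composes the two recursively built sub-pyramids as heaps. Either route works, but both hinge on the observation you have not yet isolated: the descent pattern dictates a specific orientation on every pair of commuting adjacent blocks, and the canonical linear extension must respect it.
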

We will prove Theorem~\ref{T:Eulercm} by first showing that $r_n(\Pi_n^{(r)})$ enumerates the pyramids of the monoid $\CMcal{H}(\CMcal{B}_2,\CMcal{R}_2)$ in
Lemma~\ref{L:cirn} and then building a bijection between the set of the pyramids counted by $r_n(\Pi_n^{(r)})$ and the set of permutations of $[rn-1]$ with descent set $\{r,2r,\ldots,rn-r\}$ in Lemma~\ref{L:bijrn}.

Before we proceed, we introduce some definitions and notations. We say $\pi=a_1a_2\cdots a_{2n-1}$ is an {\em alternating permutation} if
$a_1<a_2>\cdots<a_{2n-2}>a_{2n-1}$, namely, Des
$\pi=\{2,4,\ldots,2n-2\}$. It is well-known that the number of alternating permutations of $[2n-1]$ is counted by the Euler number $E_{2n-1}$ (also called the tangent number), whose exponential generating function is
\begin{eqnarray}\label{E:Euler2}
\sum_{n\ge 1}E_{2n-1}\frac{z^{2n-1}}{(2n-1)!}=\tan(z).
\end{eqnarray}
\subsection{Connection between $r_n(\Pi_n^{(r)})$ and heaps}
Recall that $\Pi_{n,r}$ is the set of set partitions of $[rn]$ where
each block has size $r$. We say $\{i_1,\ldots,i_r\}$ is a block of
partition $\pi\in \Pi_{n,r}$, denoted by $\{i_1,\ldots,i_r\}\in
\pi$, if $1\le i_1<\cdots<i_r\le rn$. The diagram
representation of $\pi\in \Pi_{n,r}$ is given as follows: We draw
$rn$ dots in a line labeled with $1,2,\ldots,rn$. For each block
$\{i_1,\ldots,i_r\}\in\pi$, we connect $i_j$ and $i_{j+1}$ by an
edge for all $1\le j<r$. We say two blocks $\{i_1,\ldots,i_r\}$ and
$\{j_1,\ldots,j_r\}$ of $\pi$ are not crossing if $i_r<j_1$ or
$j_r<i_1$. Otherwise two blocks $\{i_1,\ldots,i_r\}$ and
$\{j_1,\ldots,j_r\}$ are crossing. See Figure~\ref{F:41} for an example.
\begin{figure}[htbp]
\begin{center}
\includegraphics[scale=0.8]{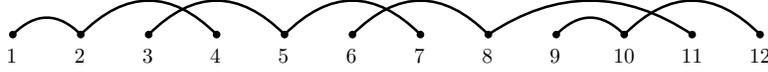}
\caption{The diagram representation of $\pi$ where $\pi\in\Pi_{4,3}$ and $\pi$ has blocks $\{1,2,4\},\{3,5,7\},\{6,8,11\},\{9,10,12\}$.
\label{F:41}}
\end{center}
\end{figure}

Stanley \cite{Stanley:ec2} proved $r_n(\Pi_n^{(2)})=E_{2n-1}$ by
using the exponential generating function of $E_{2n-1}$ given
in (\ref{E:Euler2}). More precisely, the hyperbolic tangent
function $\tanh(z)$ given by
$\tanh(z)=(e^z-e^{-z})(e^z+e^{-z})^{-1}$ satisfies
$\tanh(z)=-i\tan(iz)$. By setting $r=2$ to (\ref{E:rQ2}) and replacing
$z^2$ by $-z^2$, we get
\begin{eqnarray*}
\sum_{n\ge
1}r_n(\Pi_n^{(2)})(-1)^n\frac{z^{2n}}{(2n)!}=-\log(\sum_{n\ge
0}\frac{z^{2n}}{(2n)!}).
\end{eqnarray*}
By differentiating both sides of the above equation with respect to $z$, we have
\begin{eqnarray*}
\sum_{n\ge 1}(-1)^n \frac{r_n(\Pi_n^{(2)})z^{2n-1}}{(2n-1)!}
=-(\sum_{n\ge 0}\frac{z^{2n}}{(2n)!})^{-1}\sum_{n\ge 1}
\frac{z^{2n-1}}{(2n-1)!}=-\frac{e^{z}-e^{-z}}{e^{z}+e^{-z}}=-\tanh(z).
\end{eqnarray*}
In view of (\ref{E:Euler2}) and $\tanh(z)=-i\tan(iz)$, we obtain
\begin{eqnarray*}
\sum_{n\ge 1}(-1)^n r_n(\Pi_n^{(2)})\frac{z^{2n-1}}{(2n-1)!}
=-\tanh(z)=\sum_{n\ge 1}(-1)^n E_{2n-1}\frac{z^{2n-1}}{(2n-1)!},
\end{eqnarray*}
which yields $r_n(\Pi_n^{(2)})=E_{2n-1}$. For general $r$, we shall show that $r_n(\Pi_n^{(r)})$ counts the number of pyramids related to the set $\Pi_{n,r}$.

Let $\mathcal{B}_2=\{\{i_1,i_2,\ldots,i_r\}:1\le i_1<i_2<\cdots<i_r\}$ be the set of blocks with a binary relation $\CMcal{R}_2$ defined by $\{i_1,i_2,\ldots,i_r\}\CMcal{R}_2 \{j_1,j_2,\ldots,j_r\}$ if and
only if two blocks $\{i_1,i_2,\ldots,i_r\}$ and $\{j_1,j_2,\ldots,j_r\}$ are
crossing. Let $(P,\le)$ be a poset where every element is labeled by an element
$\{i_1,i_2,\ldots,i_r\}\in\CMcal{B}_2$ such that:
\begin{enumerate}
\item $\{i_1,i_2,\ldots,i_r\}$ and $\{j_1,j_2,\ldots,j_r\}$ are
comparable if they are crossing.
\item if $\{j_1,j_2,\ldots,j_r\}$ covers $\{i_1,i_2,\ldots,i_r\}$ in the
poset $(P,\le)$, then they are crossing.
\end{enumerate}
By definition~\ref{D:heap}, the poset $(P,\le)$ is a heap
$H=(P,\le)\in\CMcal{H}(\CMcal{B}_2,\CMcal{R}_2)$. In a geometric way, we
can represent each block $\{i_1,i_2,\ldots,i_r\}$ by a line
$i_1-i_2-\cdots-i_r$ and put $i_1-i_2-\cdots-i_r$ above $j_1-j_2-\cdots-j_r$ if
$\{j_1,j_2,\ldots,j_r\} \le \{i_1,i_2,\ldots,i_r\}$ in the heap
$H=(P,\le)$ and the line $i_1-i_2-\cdots-i_r$ cannot move downwards without touching the line $j_1-j_2-\cdots-j_r$. See Figure~\ref{F:3}.
\begin{figure}[htbp]
\begin{center}
\includegraphics[scale=0.7]{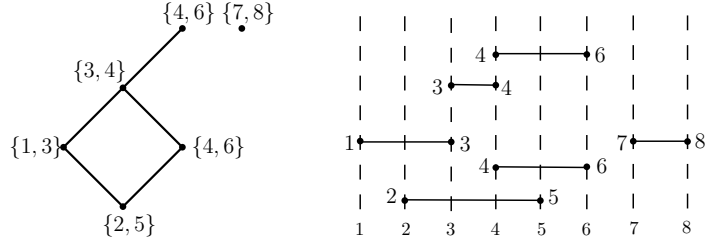}
\caption{A heap $H$ (left) in the monoid $\CMcal{H}(\CMcal{B}_2,\CMcal{R}_2)$ and its geometric representation (right). The corresponding word in the monoid $\CMcal{M}(\CMcal{B}_2,\CMcal{R}_2)$ is $\varphi(H)=x_{\{2,5\}}x_{\{7,8\}}x_{\{1,3\}}
x_{\{4,6\}}x_{\{3,4\}}x_{\{4,6\}}$.
\label{F:3}}
\end{center}
\end{figure}
Now we are in a position to establish a connection between $r_n(\Pi_n^{(r)})$ and the set of pyramids $\CMcal{P}(\CMcal{B}_2,\CMcal{R}_2)$ via (\ref{E:logpy}).
\begin{lemma}\label{L:cirn}
$r_n(\Pi_n^{(r)})$ counts the number of pyramids $P_n^*$ in $\CMcal{P}(\CMcal{B}_2,\CMcal{R}_2)$
such that $P_n^*$ has $n$ elements which are exactly the blocks of a partition from $\Pi_{n,r}$ and
the unique maximal element of $P_n^*$ is the block containing $rn$.
\end{lemma}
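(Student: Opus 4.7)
The plan is to specialize the Cartier-Foata identity (\ref{E:logpy}) on $\CMcal{H}(\CMcal{B}_2,\CMcal{R}_2)$ by the substitution $x_B\mapsto \prod_{i\in B}z_i$ in the commutative image, then extract the squarefree coefficient $[z_1 z_2\cdots z_{rn}]$ from both sides and compare with the defining equation (\ref{E:defr}) of $r_n(\Pi_n^{(r)})$.

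The right-hand side of (\ref{E:logpy}) is clean: two $\CMcal{R}_2$-unrelated blocks are range-disjoint, hence set-disjoint, so the pieces of any trivial heap are pairwise disjoint subsets of $\mathbb{N}$, and for each finite $S\subset\mathbb{N}$ with $|S|=rk$ there is a unique trivial heap partitioning $S$ (sort $S$ and group into consecutive $r$-blocks). Thus
\begin{equation*}
\sum_T(-1)^{|T|}\varphi(T)=\sum_{k\ge 0}(-1)^k e_{rk}(z_1,z_2,\ldots),
\end{equation*}
with $e_m$ the $m$-th elementary symmetric polynomial. The standard species/EGF identification sends a symmetric multilinear series $F=\sum_m a_m e_m(z)$ to the EGF $\hat F(z)=\sum_m a_m z^m/m!$ so that $[z_1\cdots z_m]F=[z^m/m!]\hat F$, a correspondence that respects products on multilinear parts and therefore also $-\log$. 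Hence $[z_1\cdots z_{rn}](-\log\sum_T(-1)^{|T|}\varphi(T))$ equals $[z^{rn}/(rn)!](-\log\sum_k(-1)^k z^{rk}/(rk)!)$, and the change of variable $u=z^r/r!$ converts (\ref{E:defr}) into $\sum_n r_n(\Pi_n^{(r)})z^{rn}/(rn)!=-\log\sum_n(-1)^n z^{rn}/(rn)!$, so this coefficient equals $r_n(\Pi_n^{(r)})$.

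On the left-hand side, $[z_1\cdots z_{rn}]\sum_P\varphi(P)/|P|$ receives contributions only from pyramids $P$ whose pieces are pairwise disjoint and cover $[rn]$---equivalently, pyramids of size $n$ whose pieces form some $\pi\in\Pi_{n,r}$---and with the factor $1/|P|=1/n$ the coefficient equals $p_n/n$, where $p_n$ is their total number. To identify this with the count $S_n$ in the lemma, I will use the double-counting
\begin{equation*}
\sum_{i=1}^{rn}\#\{P:i\in\max P\}=\sum_P|\max P|=r\,p_n,
\end{equation*}
together with a uniformity claim that the $rn$ summands on the left are equal; this gives $S_n=\#\{P:rn\in\max P\}=p_n/n=r_n(\Pi_n^{(r)})$. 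The uniformity reduces to a per-partition statement: for every $\pi\in\Pi_{n,r}$, the number of pyramid structures on $\pi$'s blocks with max $B$ is the same for every $B\in\pi$.

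The hard part will be establishing this per-partition uniformity. A naive relabeling $\sigma\in\mathfrak{S}_{rn}$ does not preserve the crossing relation $\CMcal{R}_2$, so $\mathfrak{S}_{rn}$ does not literally act on pyramids. Instead I plan to construct an explicit bijection between pyramid structures on $\pi$ with max $B$ and those with max $B'$ for any two blocks $B,B'\in\pi$, exploiting the fact that the unique max of a pyramid must appear as the last letter of every linear-extension word in $\varphi(P)$ so that max-swapping corresponds to a controlled manipulation of Cartier-Foata equivalence classes. Small cases confirm the uniformity, but its general proof will require a careful analysis of those word classes.
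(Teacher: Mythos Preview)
Your strategy matches the paper's: apply the ring homomorphism $x_B\mapsto\prod_{i\in B}a_i$ (your $z_i$; the paper imposes $a_i^2=0$) to the Cartier--Foata identity~(\ref{E:logpy}), identify the trivial-heap side with $\sum_k(-1)^k(a_1+a_2+\cdots)^{rk}/(rk)!$, and compare with~(\ref{E:rQ2}) to obtain $r_n(\Pi_n^{(r)})=p_{n,r}/n$, where $p_{n,r}$ is the total number of pyramids whose pieces are the blocks of some partition in $\Pi_{n,r}$. Your treatment of both sides is correct and essentially identical to the paper's.

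Where you diverge is the final step, passing from $p_{n,r}/n$ to the count of pyramids whose maximal block contains $rn$. The paper dispatches this in a single clause (``Since every pyramid \ldots\ has $n$ elements, it follows that \ldots''), whereas you have correctly flagged that it requires justification, and your reduction to a per-partition uniformity statement is valid. However, your proposed route---an explicit bijection between pyramid structures with different maxima via manipulation of Cartier--Foata word classes---is harder than necessary. The per-partition uniformity drops out of a short generating-function identity: with $D=\sum_T(-1)^{|T|}\varphi(T)$ and $N(m)$ the closed $\CMcal{R}_2$-neighbourhood of a piece $m$, one has
\[
\sum_{P:\,\max P=m}\varphi(P)
\;=\;\frac{D_{\,\setminus\{m\}}}{D}-1
\;=\;\frac{x_m\,D_{\,\setminus N(m)}}{D}
\;=\;x_m\,\frac{\partial}{\partial x_m}\bigl(-\log D\bigr)
\;=\;\sum_{P}\frac{\#\{m\text{'s in }P\}}{|P|}\,\varphi(P),
\]
the first equality being the standard ``restricted maximal pieces'' formula $\sum_{\max H\subseteq M}\varphi(H)=D_{M^c}/D$ specialized to $M=\{m\}$, and the middle two a direct computation of $\partial_m D$. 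Extracting the squarefree coefficient $\bigl[\prod_{s\in S}x_s\bigr]$ over any fixed set $S$ of $n$ distinct pieces gives
\[
\#\{P\text{ on }S:\max P=m\}\;=\;\frac{1}{n}\,\#\{P\text{ on }S\}\qquad\text{for every }m\in S,
\]
which is exactly your per-partition claim, valid for arbitrary dependency relations and not just $\CMcal{R}_2$. This is implicit in the heap references the paper cites, which is presumably why the paper does not spell it out; you should invoke it rather than attempt an ad hoc bijection.
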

\begin{proof}
From (\ref{E:logpy}) we know
\begin{equation}\label{E:1}
\sum_{P\in \CMcal{P}(\CMcal{B}_2,\CMcal{R}_2)}\frac{\varphi(P)}{\vert P\vert} =_{\mbox{\scriptsize{comm}}} -\log(\sum_{T\in
\CMcal{T}(\CMcal{B}_2,\CMcal{R}_2)}(-1)^{\vert T\vert}\varphi(T)).
\end{equation}
We now consider the Cartier-Foata monoid $\CMcal{M}(\CMcal{B}_2,\CMcal{R}_2)$.
Let $\{a_i\}_{i\ge 1}$ be a sequence of variables such that $a_i^2=0$ for
every $i$ and $a_ia_j=a_ja_i$ for every $i,j$. Furthermore let
$f:\mathbb{Z}[[\CMcal{M}(\CMcal{B}_2,\CMcal{R}_2)]]\rightarrow
\mathbb{Z}[[a_1,a_2,\ldots]]$
be a ring homomorphism such that
$f(x_{\{i_1,i_2,\ldots,i_r\}})=\prod_{j=1}^r a_{i_j}$ and $f(1)=1$. In other words, the commutative ring $\mathbb{Z}[[a_1,a_2,\ldots]]$ is a commutative extension of the ring
$\mathbb{Z}[[\CMcal{M}(\CMcal{B}_2,\CMcal{R}_2)]]$. We apply $f$ on both sides of (\ref{E:1}) and obtain
\begin{equation}\label{E:logpy1}
\sum_{P\in \CMcal{P}(\CMcal{B}_2,\CMcal{R}_2)}\frac{f(\varphi(P))}{\vert
P\vert}=-\log(\sum_{T\in
\CMcal{T}(\CMcal{B}_2,\CMcal{R}_2)}(-1)^{\vert
T\vert}f(\varphi(T))).
\end{equation}
Let $\CMcal{T}_n(\CMcal{B}_2,\CMcal{R}_2)$ be the set of trivial heaps of size $n$ contained in the set $\CMcal{T}(\CMcal{B}_2,\CMcal{R}_2)$ and suppose that $T_n\in\CMcal{T}_n(\CMcal{B}_2,\CMcal{R}_2)$ is a trivial heap
having $n$ pieces $\{i_1,\ldots,i_r\}$, $\{i_{r+1},\ldots,i_{2r}\}$,
$\ldots,\{i_{rn-r+1},\ldots,i_{rn}\}$ such that $i_1<i_{r+1}<\cdots<i_{rn-r+1}$. Then $i_1,i_2,\ldots,i_{rn}$ must be a strictly increasing sequence. We use $I_{T_n}=i_1,i_2,\ldots,i_{rn}$ to denote this strictly increasing
sequence. Thus, the map $T_n\mapsto I_{T_n}$ is a bijection between $\CMcal{T}_n(\CMcal{B}_2,\CMcal{R}_2)$ and the set of strictly
increasing sequences of length $rn$. This yields
\begin{align*}
\sum_{T_n\in\CMcal{T}_n(\CMcal{B}_2,\CMcal{R}_2)}
(-1)^nf(\varphi(T_n))=\sum_{I_{T_n}}(-1)^n\prod_{j=1}^{rn}a_{i_j}
=(-1)^n\frac{(a_1+a_2+\cdots)^{rn}}{(rn)!}
\end{align*}
where the second summation runs over all the strictly increasing
sequences $I_{T_n}$ of length $rn$ and the last equation holds
because $a_i^2=0$ for every $i$. It follows immediately that
the right hand side of (\ref{E:logpy1}) is
\begin{align*}
-\log(\sum_{T\in \CMcal{T}(\CMcal{B}_2,\CMcal{R}_2)}(-1)^{\vert
T\vert}f(\varphi(T)))&=-\log(\sum_{n\ge
0}\sum_{T_n\in \CMcal{T}_n(\CMcal{B}_2,\CMcal{R}_2)}
(-1)^nf(\varphi(T_n)))\\
&=-\log(\sum_{n\ge
0}(-1)^n\frac{(a_1+a_2+\cdots)^{rn}}{(rn)!}).
\end{align*}
On the other hand, let $\CMcal{P}_n(\CMcal{B}_2,\CMcal{R}_2)$ be the set
of pyramids of size $n$ contained in the set
$\CMcal{P}(\CMcal{B}_2,\CMcal{R}_2)$. Suppose that $P_n\in
\CMcal{P}_n(\CMcal{B}_2,\CMcal{R}_2)$ is a pyramid of size $n$ such that
$f(\varphi(P_{n}))\ne 0$, then the elements of the poset
$P_n$ are exactly the blocks of a set partition of $rn$ integers where every block has size $r$, and every block is crossing with at least one other block of this set partition. We continue using $I_{T_n}$ to
represent any strictly increasing sequence $i_1,i_2,\ldots,i_{rn}$.
A strictly increasing sequence $I_{T_n}$ uniquely corresponds with a set $\{i_1,i_2,\ldots,i_{rn}\}$. For a given set
$\{i_1,i_2,\ldots,i_{rn}\}$, let $p_{n,r}$ count the number of
pyramids $P_n$ such that the elements of $P_n$ are exactly the blocks of a set partition of
$\{i_1,i_2,\ldots,i_{rn}\}$ where each block has size $r$.
Then we have
\begin{align*}
\sum_{P_n\in \CMcal{P}_n
(\CMcal{B}_2,\CMcal{R}_2)}f(\varphi(P_n))
&=\sum_{I_{T_n}}p_{n,r}\prod_{j=1}^{rn}a_{i_j}
=\sum_{I_{T_n}}p_{n,r}\frac{(a_1+a_2+\cdots)^{rn}}{(rn)!}.
\end{align*}
Note that the number $p_{n,r}$ is independent of the choice of the set $\{i_1,i_2,\ldots,i_{rn}\}$. Without loss of generality, we choose the set
$\{1,2,\ldots,rn\}$. It turns out that the left hand
side of (\ref{E:logpy1}) is
\begin{align*}
\sum_{P\in
\CMcal{P}(\CMcal{B}_2,\CMcal{R}_2)}\frac{f(\varphi(P))}{\vert P\vert}&=\sum_{n\ge 0}\sum_{P_n\in \CMcal{P}_n
(\CMcal{B}_2,\CMcal{R}_2)}
\frac{f(\varphi(P_n))}{n}=\sum_{n\ge
0}\frac{p_{n,r}}{n}\frac{(a_1+a_2+\cdots)^{rn}}{(rn)!}
\end{align*}
In view of (\ref{E:rQ2}), we get $r_n(\Pi_n^{(r)})=n^{-1}p_{n,r}$.
Since every pyramid in $\CMcal{P}(\CMcal{B}_2,\CMcal{R}_2)$ has $n$ elements, it follows that $r_n(\Pi_n^{(r)})$ counts the number of pyramids $P^*_n$ in $\CMcal{P}(\CMcal{B}_2,\CMcal{R}_2)$ such that the unique maximal element of $P_n^*$ is the block containing $rn$ and $P_n^*$ has $n$ elements which are exactly the blocks of a partition in $\Pi_{n,r}$. The proof is complete.
\end{proof}
For example, $r_2(\Pi_2^{(2)})=2$ counts the pyramid $\{\{1,3\},\{2,4\}\}$ such that $\{1,3\}\le \{2,4\}$ and the pyramid $\{\{1,4\},\{2,3\}\}$ such that $\{2,3\}\le \{1,4\}$.
\begin{lemma}\label{L:bijrn}
There is a bijection between the set of permutations of $[rn-1]$ with descent set
$\{r,2r,\ldots,rn-r\}$ and the set of pyramids $P_n^*$ in $\CMcal{P}(\CMcal{B}_2,\CMcal{R}_2)$ such that
$P_n^*$ has $n$ elements which are exactly the blocks of a partition in $\Pi_{n,r}$ and
the unique maximal element of $P_n^*$ is the block containing $rn$.
\end{lemma}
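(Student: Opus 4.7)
The plan is to construct explicit forward and inverse maps and verify they are mutually inverse. For the forward direction, given a permutation $\pi = a_1 a_2 \cdots a_{rn-1}$ with descent set $\{r, 2r, \ldots, rn-r\}$, I partition the augmented sequence $a_1, \ldots, a_{rn-1}, rn$ into $n$ consecutive groups of length $r$, setting $B_i = \{a_{(i-1)r+1}, \ldots, a_{ir}\}$ for $i < n$ and $B_n = \{a_{(n-1)r+1}, \ldots, a_{rn-1}\} \cup \{rn\}$. This yields a partition of $[rn]$ lying in $\Pi_{n,r}$ with $rn \in B_n$. I then build the heap $H$ by dropping the pieces in the order $B_1, B_2, \ldots, B_n$, so that $B_1, B_2, \ldots, B_n$ is a natural labeling of $H$.

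The key technical step is to verify that $H$ is a pyramid whose unique maximum is $B_n$, that is, $B_i \le B_n$ in the heap for every $i < n$. I argue by contradiction: suppose some $B_i$ with $i < n$ is not crossing with any $B_j$ for $j > i$. The descent $a_{ir} > a_{ir+1}$ gives $\max B_i > \min B_{i+1}$, so the failure of the crossing relation between $B_i$ and $B_{i+1}$ forces $\max B_{i+1} < \min B_i$. Using the descents at positions $(i+1)r, (i+2)r, \ldots, (n-2)r$, the same reasoning at each step propagates this to $\max B_j < \min B_i$ for every $i < j \le n-1$. The final descent at position $(n-1)r$ yields $\min B_n < \max B_{n-1} < \min B_i \le \max B_i$; since $\max B_n = rn$, the non-crossing assumption for $B_i$ and $B_n$ would force $\max B_i < \min B_n$, which contradicts this chain of inequalities.

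For the inverse map, I claim that each pyramid $P_n^*$ admits a unique linear extension $B_1, B_2, \ldots, B_n$ of the underlying poset such that $\max B_i > \min B_{i+1}$ for every $i < n$. Given this extension, I concatenate the elements of the blocks in increasing order to obtain a word ending in $rn$, then delete that terminal $rn$ to recover a permutation of $[rn-1]$. Its descent set is exactly $\{r, 2r, \ldots, rn-r\}$: no descent can occur inside a block since the elements are listed in increasing order, and a descent at every boundary $ir$ is guaranteed by the defining property of the extension.

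The main obstacle is uniqueness of this descent-compatible linear extension. My plan is to argue by induction on $n$, comparing two such extensions that first disagree at some position $k$. The two choices are distinct minimal blocks $B$ and $B'$ of the remaining subposet; since minimal elements in a heap are pairwise incomparable and hence non-crossing, I may assume $\max B < \min B'$. A propagation argument parallel to the one above, tracking how the descent conditions constrain the subsequent choices once either $B$ or $B'$ is fixed at position $k$, then shows that only one of the two choices can be completed to a full descent-compatible sequence. Once existence (supplied by the forward map) and uniqueness are established, verifying that the two maps are mutual inverses is immediate from the construction.
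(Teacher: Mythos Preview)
Your approach differs from the paper's and is largely sound, but has one genuine gap. The paper builds the bijection recursively: it locates the integer $m$ maximal subject to $m+1,\ldots,rn$ lying in a single block, splits the ordered list of blocks at the block containing $m$ into a prefix $\sigma_1$ and suffix $\sigma_2$, recursively forms pyramids ${\sf g}(\sigma_1),{\sf g}(\sigma_2)$, and stacks the second on the first; the inverse peels off the sub-pyramid below the block containing $m$. Your route is more direct---drop the blocks $B_1,\ldots,B_n$ in order and, for the inverse, recover the unique descent-compatible linear extension---and it ties the bijection cleanly to the word/linear-extension picture of the Cartier--Foata monoid.

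The gap is in surjectivity. You assert that every pyramid $P_n^*$ has a descent-compatible linear extension and say this is ``supplied by the forward map.'' But the forward map only shows that pyramids \emph{in its image} carry such an extension; invoking this for an arbitrary $P_n^*$ is circular, since surjectivity of the forward map is precisely what remains to be proved. You need an independent existence argument. One clean fix: start from any linear extension of $P_n^*$ and, whenever $\max B_i<\min B_{i+1}$, note that $B_i,B_{i+1}$ are non-crossing, hence incomparable in the heap, hence swappable; the potential $\sum_i i\cdot\min B_i$ strictly decreases under such a swap, so the process terminates at a descent-compatible extension.

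Two smaller points worth tightening. First, your contradiction hypothesis (``$B_i$ crosses no $B_j$ with $j>i$'') is not literally the negation of ``$B_i\le B_n$''; you should remark that a maximal element of the dropped heap other than $B_n$ is exactly a $B_i$ crossing no later block, or else run a reverse induction on $i$. Second, your uniqueness sketch does work, but to reach a contradiction you must run the propagation in \emph{both} putative extensions: propagating in the extension with $B'$ at position $k$ only recovers $\max B<\min B'$, while the same propagation applied to $B'$ inside the extension with $B$ at position $k$ yields $\max B'<\min B$, and it is the conjunction of these that is contradictory.
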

\begin{proof}
Let $\pi=a_1a_2\cdots a_{rn-1}$ be a permutation of $[rn-1]$ such that
$\mbox{Des}\,\pi=\{r,2r,\ldots,rn-r\}$. We consider a partition $\sigma\in \Pi_{n,r}$ whose blocks are exactly $$\{a_1,\ldots,a_r\},\{a_{r+1},\ldots,a_{2r}\},\cdots,\{a_{rn-r+1},\ldots,a_{rn-1},rn\},$$
and we now prove that we can choose them in such a way that they are the elements of a pyramid $P_n^*$. It remains to define an order $\le$ of the elements in the pyramid $P_n^*$. The correspondence ${\sf g}(\sigma)$ is defined inductively. Let $m$ be the minimal integer such that $m+1,m+2,\ldots,rn$ are in the same block. We notice that $m$ and $rn$ are in two crossing blocks of $\sigma$ and we consider the block that contains $m$. Suppose that $\{a_{i_1},\ldots,a_{i_1+r-1}\}$ is the block of $\sigma$ such that $a_{i_1+r-1}=m$ and let $\sigma_1=\{\{a_{k},\ldots,a_{r+k-1}\}:k\le i_1\}$ be a subset of the blocks of $\sigma$. Let $\sigma_2=\{\{a_{k},\ldots,a_{r+k-1}\}:k>i_1\}$ be the set of remaining blocks. Then we can write $\sigma=\sigma_1\sigma_2$ and we use $\vert\sigma_i\vert$ to denote the number of blocks in the partition $\sigma_i$, for $i=1,2$. By induction, ${\sf g}(\sigma_i)$ is a pyramid such that the unique maximal element is the block containing $m$ if $i=1$, or the block containing $rn$ if $i=2$, and ${\sf g}(\sigma_i)$ has $\vert\sigma_i\vert$ elements which are exactly the blocks of $\sigma_i$, for $i=1,2$. For every element $x$ in the pyramid ${\sf g}(\sigma_1)$, suppose that $y_x$ is one of the minimal elements in the pyramid ${\sf g}(\sigma_2)$ that is crossing with $x$ but not crossing with any $z$ such that $x\le z$ in the pyramid ${\sf g}(\sigma_1)$, then the pyramid ${\sf g}(\sigma)$ is obtained by letting $y_x$ cover $x$ for every $x$ and $y_x$. In particular, $\{a_{i_1},\ldots,a_{i_1+r-1}\}\le \{a_{rn-r+1},\ldots,a_{rn-1},rn\}$ in the pyramid ${\sf g}(\sigma)$, which implies that the unique maximal element of ${\sf g}(\sigma)$ is the block containing $rn$. In fact, this provides an inductive process to successively construct the pyramid ${\sf g}(\sigma)$ whose unique maximal element is the block of $\sigma$ containing $rn$ and ${\sf g}(\sigma)$ has $n$ elements that are exactly the blocks of $\sigma$.

Conversely, consider a pyramid $P_n^*$ in $\CMcal{P}(\CMcal{B}_2,\CMcal{R}_2)$ such that $P_n^*$ has $n$ elements which are exactly the blocks of a partition $\sigma\in\Pi_{n,r}$ and the unique maximal element is $\{a_{rn-r+1},\ldots,a_{rn-1},rn\}$. We consider the pyramid $P_1$ that is induced by the block that contains $m$, i.e., $P_1$ contains all the elements $x$ such that $x\le \{a_{i_1},\ldots,a_{i_1+r-1}\}$ in the pyramid $P_n^*$ where $a_{i_1+r-1}=m$. Let $P_2$ be the pyramid of remaining elements from $P_n^*$. Then $P_i={\sf g}(\sigma_i)$ for $i=1,2$ which implies the above inductive process is bijective and therefore the proof is complete.
\end{proof}
See Figure~\ref{F:4} for an example of this bijection ${\sf g}$.
\begin{figure}[htbp]
\begin{center}
\includegraphics[scale=0.7]{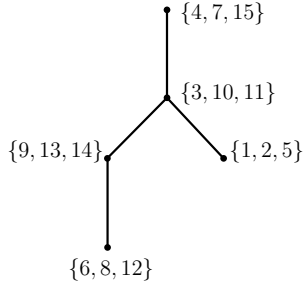}
\caption{The pyramid ${\sf g}(\sigma)$ where $\pi=6\,8\,12\,9\,13\,14\,1\,2\,5\,3\,10\,11\,4\,7\in\mathfrak{S}_{14}$ and $\sigma$ is a partition with blocks $\{6,8,12\}$, $\{9,13,14\}$, $\{1,2,5\}$, $\{3,10,11\}$, $\{4,7,15\}$.
\label{F:4}}
\end{center}
\end{figure}
\section{On the poset ${\sf Q}_n^{(r)}$}\label{S:rpar}
An $r$-partition of $[n]$ is a set
\begin{align*}
\pi=\{(B_{11},\cdots,B_{1r}),(B_{21},\cdots,B_{2r}),\cdots,
(B_{k1},\cdots, B_{kr})\}
\end{align*}
satisfying the following two conditions:
\begin{enumerate}
\item for each $j\in [r]$, the set
$\pi_j=\{B_{1j},B_{2j},\cdots,B_{kj}\}$ forms a partition of $S$
(into $k$ blocks),
\item for fixed $i$, $\vert B_{i1}\vert=\vert B_{i2}\vert
=\cdots=\vert B_{ir}\vert$.
\end{enumerate}
The set of all the $r$-partitions of set $[n]$, denoted by
${\sf Q}_n^{(r)}$, has a partial ordering by refinement. Namely, let
\begin{align*}
\pi&=\{(B_{11},\cdots,B_{1r}),(B_{21},\cdots,B_{2r}),\cdots,
(B_{k1},\cdots, B_{kr})\}\\
\sigma&=\{(A_{11},\cdots,A_{1r}),(A_{21},\cdots,A_{2r}),\cdots,
(A_{\ell 1},\cdots, A_{\ell r})\}
\end{align*}
be two $r$-partitions of $[n]$ where we set for $1\le j\le r$,
\begin{align*}
\pi_j=\{B_{1j},B_{2j},\cdots,B_{kj}\}\quad \mbox{ and }\quad
\sigma_j=\{A_{1j},A_{2j},\cdots,A_{\ell j}\}.
\end{align*}
Then $\pi\le\sigma$ if for every $1\le j\le r$, we have $\pi_j\le \sigma_j$,
i.e., every block of partition $\pi_j$ is contained in a block of
partition $\sigma_j$. For instance,
$$\pi=\{(\{1\},\{2\}),(\{2\},\{3\}),(\{3\},\{1\})\}\quad \mbox{ and }\quad\sigma=\{(\{1,2\},\{2,3\}),(\{3\},\{1\})\}$$
are two $2$-partitions
of $[3]$. Then we have $\pi_1=\pi_2=\{\{1\},\{2\},\{3\}\}$,
$\sigma_1=\{\{1,2\},\{3\}\}$, $\sigma_2=\{\{2,3\},\{1\}\}$ and
$\pi_1\le \sigma_1$, $\pi_2\le \sigma_2$ by refinement. That implies that
$\pi\le\sigma$ in the poset ${\sf Q}_3^{(2)}$.
By definition, every minimal element of ${\sf Q}_n^{(r)}$ can be identified as an $(r-1)$-tuple
$(\sigma_1,\sigma_2,\ldots,\sigma_{r-1})$ of permutations
$\sigma_i\in\mathfrak{S}_n$ by
\begin{align}\label{E:min}
\rho=\{(\{1\},\{\sigma_1(1)\},\ldots,\{\sigma_{r-1}(1)\}),\ldots,
(\{n\},\{\sigma_1(n)\},\ldots,\{\sigma_{r-1}(n)\})\}.
\end{align}
As an immediate consequence, the number of minimal elements of ${\sf Q}_n^{(r)}$ is equal to the number of ordered permutations of $[n]$, i.e.,
$(\sigma_1,\sigma_2,\cdots,\sigma_{r-1})$ where
$\sigma_i\in\mathfrak{S}_n$. Let ${\sf Q}_{n,r}$ be the set of $(r-1)$-tuple $(\sigma_1,\sigma_2,\cdots,\sigma_{r-1})$ of permutations $\sigma_i\in\mathfrak{S}_n$,
it follows that $M(n)=\vert {\sf Q}_{n,r}\vert=n!^{r-1}$ and that the sequence $\{r_n({\sf Q}_n^{(r)})\}_{n\ge 1}$ where $r\ge 2$ defined by
(\ref{E:defr}) satisfies
\begin{eqnarray}\label{E:rQ3}
\sum_{n\ge 1}\frac{r_n({\sf Q}_n^{(r)})z^n}{n!^{r}}=-\log(\sum_{n\ge
0}(-1)^n\frac{z^n}{n!^{r}}).
\end{eqnarray}
For any $\sigma=(\sigma_1,\sigma_2,\ldots,\sigma_{r-1})\in {\sf Q}_{n,r}$, the diagram representation of $\sigma$ is described as follows: We draw $rn$ dots in $r$ rows with each row having $n$ dots labeled by $1,2,\ldots,n$, and, for every $i,m$, we connect $m$ from the $i$-th row with $\sigma_i(m)$ from the $(i+1)$-th row. For every $m\in [n]$, we call the sequence $$(m,\sigma_1(m),\sigma_2\sigma_1(m),\ldots,\sigma_{r-1}\cdots\sigma_2\sigma_1(m))$$
a {\em path} of $\sigma$ that starts from $m$. See Figure~\ref{F:5}. Clearly every $\sigma\in {\sf Q}_{n,r}$ has $n$ paths. Two paths $(a_1,a_2,\ldots,a_r)$ and $(b_1,b_2,\ldots,b_r)$ are not crossing if and only if $a_k<b_k$ for all $k$. Otherwise two paths $(a_1,a_2,\ldots,a_r)$ and $(b_1,b_2,\ldots,b_r)$ are crossing.
\begin{figure}[htbp]
\begin{center}
\includegraphics[scale=0.7]{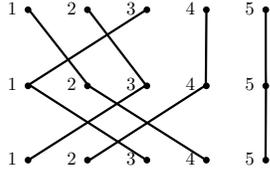}
\caption{The diagram representation of $\sigma=(23145,34125)$ and $\sigma$ has paths
$(1,2,4)$, $(2,3,1)$, $(3,1,3)$, $(4,4,2)$ and $(5,5,5)$.
\label{F:5}}
\end{center}
\end{figure}
Let $\CMcal{B}_3=\{(i_1,i_2,\ldots,i_r):i_j\ge 1, 1\le j\le r\}$ be the set of paths of length $r-1$ with a symmetric and reflexive binary relation $\CMcal{R}_3$ defined by: $(i_1,\ldots,i_r)\CMcal{R}_3(j_1,\ldots,j_r)$ if and only if two paths
$(i_1,\ldots,i_r)$ and $(j_1,\ldots,j_r)$ are crossing. Let
$(R,\le)$ be a poset where each element is labeled by an element
$(i_1,\ldots,i_r)\in\CMcal{B}_3$ such that:
\begin{enumerate}
\item $(i_1,\ldots,i_r)$ and
$(j_1,\ldots,j_r)$ are comparable if $(i_1,\ldots,i_r)$
and $(j_1,\ldots,j_r)$ are crossing.
\item if $(j_1,\ldots,j_r)$ covers $(i_1,\ldots,i_r)$ in the poset $(R,\le)$, then
they are crossing.
\end{enumerate}
By definition~\ref{D:heap}, the poset $(R,\le)$ is a heap $H=(R,\le)\in\CMcal{H}(\CMcal{B}_3,\CMcal{R}_3)$. In a geometric way, we
put the path $(j_1,\ldots,j_r)$ on top of the path $(i_1,\ldots,i_r)$ if
$(i_1,\ldots,i_r)\le (j_1,\ldots,j_r)$ in the heap $H=(R,\le)$. See Figure~\ref{F:6}.
\begin{figure}[htbp]
\begin{center}
\includegraphics[scale=0.7]{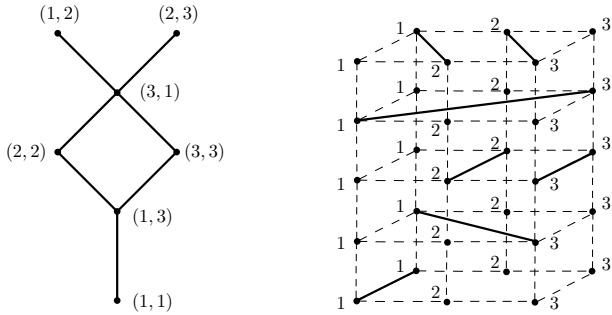}
\caption{A heap $H$ (left) in the monoid $\CMcal{H}(\CMcal{B}_3,\CMcal{R}_3)$ and its geometric representation (right). The corresponding word in the monoid $\CMcal{M}(\CMcal{B}_3,\CMcal{R}_3)$ is $\varphi(H)=x_{(1,1)}x_{(1,3)}x_{(2,2)}x_{(3,3)}x_{(3,1)}x_{(1,2)}x_{(2,3)}$.
\label{F:6}}
\end{center}
\end{figure}

Furthermore, {\em non-ambiguous trees} were introduced by Aval {\it et
al.} \cite{A:13} because of their connection to the tree-like tableaux. The non-ambiguous trees are embedded in a $2$-dimensional $\mathbb{N}\times\mathbb{N}$ grid. Let each vertex $v$ have coordinates
$x(v)=(x_1(v),x_2(v))$. Then a $2$-dimensional non-ambiguous tree of
size $n$ is a set $A$ of $n$ points
$(x_1(v),x_2(v))\in\mathbb{N}\times\mathbb{N}$ such that:
\begin{enumerate}
\item $(0,0)\in A$; we call this point the root of $A$.
\item for a given non-root point $p\in A$, there exists one point $q\in
A$ such that $x_2(q)<x_2(p)$ and $x_1(q)=x_1(p)$ or one point $s\in
A$ such that $x_1(s)<x_1(p)$ and $x_2(s)=x_2(p)$ (but not both).
\item there is no empty line between two given points: if there
exists a point $p\in A$ such that $x_1(p)=x$ (resp. $x_2(p)=y$),
then for every $x'<x$ (resp. $y'<y$) there exists $q\in A$ such that
$x_1(q)=x'$ (resp. $x_2(q)=y'$).
\end{enumerate}
A {\em complete non-ambiguous tree} is a non-ambiguous tree whose vertices
have either $0$ or $2$ children. The non-ambiguous tree $A$ has a
unique tree structure since, except for the root, every point $p\in
A$ has a unique parent, which is the nearest point $q$ or $s$ in
condition $(2)$. In other words, the set $A$ of points determines
the tree structure. Let $T_A$ be the unique underlying tree
associated to the vertex set $A$. For instance, there are four
complete non-ambiguous trees of size $5$ whose underlying trees are given here.
\begin{center}
\setlength{\unitlength}{3.5pt}
\begin{picture}(30,18)(32,-14)
\put(10,0){\circle*{0.8}}\put(10,0){\line(-1,-1){8}}
\put(10,0){\line(1,-1){8}}\put(2,-8){\circle*{0.8}}
\put(14,-4){\circle*{0.8}}\put(18,-8){\circle*{0.8}}
\put(14,-4){\line(-1,-1){4}}\put(10,-8){\circle*{0.8}}
\put(7,1){\scriptsize{$(0,0)$}}\put(-2,-5){\scriptsize{$(0,2)$}}
\put(14,-3){\scriptsize{$(1,0)$}}\put(6,-11){\scriptsize{$(1,1)$}}
\put(18,-7){\scriptsize{$(2,0)$}}
\put(35,0){\circle*{0.8}}\put(35,0){\line(-1,-1){8}}
\put(35,0){\line(1,-1){8}}\put(27,-8){\circle*{0.8}}
\put(31,-4){\circle*{0.8}}\put(43,-8){\circle*{0.8}}
\put(31,-4){\line(1,-1){4}}\put(35,-8){\circle*{0.8}}
\put(32,1){\scriptsize{$(0,0)$}}\put(26,-11){\scriptsize{$(0,2)$}}
\put(25,-3){\scriptsize{$(0,1)$}}\put(33,-11){\scriptsize{$(1,1)$}}
\put(43,-7){\scriptsize{$(2,0)$}}
\put(60,0){\circle*{0.8}}\put(60,0){\line(-1,-1){8}}
\put(60,0){\line(1,-1){4}}\put(64,-4){\circle*{0.8}}
\put(56,-4){\circle*{0.8}}\put(52,-8){\circle*{0.8}}
\put(56,-4){\line(1,-1){8}}\put(64,-12){\circle*{0.8}}
\put(57,1){\scriptsize{$(0,0)$}}\put(50,-3){\scriptsize{$(0,1)$}}
\put(64,-3){\scriptsize{$(1,0)$}}\put(50,-12){\scriptsize{$(0,2)$}}
\put(57,-12){\scriptsize{$(2,1)$}}
\put(85,0){\circle*{0.8}}\put(85,0){\line(-1,-1){4}}
\put(85,0){\line(1,-1){8}}\put(89,-4){\circle*{0.8}}
\put(81,-4){\circle*{0.8}}\put(93,-8){\circle*{0.8}}
\put(89,-4){\line(-1,-1){8}}\put(81,-12){\circle*{0.8}}
\put(82,1){\scriptsize{$(0,0)$}}\put(75,-3){\scriptsize{$(0,1)$}}
\put(89,-3){\scriptsize{$(1,0)$}}\put(93,-7){\scriptsize{$(2,0)$}}
\put(83,-12){\scriptsize{$(1,2)$}}
\end{picture}
\end{center}
One of our main results is stated
as follows:
\begin{theorem}\label{T:rnQn}
$r_n({\sf Q}_n^{(r)})$ counts the number of pyramids $Q_n^*$ in $\CMcal{P}(\CMcal{B}_3,\CMcal{R}_3)$ such that
$Q_n^*$ has $n$ elements which are exactly the paths of an $(r-1)$-tuple of permutations in $\mathfrak{S}_{n}$ and
the unique maximal element of $Q_n^*$ is the path starting from $1$. In particular,
$r_n({\sf Q}_n^{(2)})$ counts the number of complete non-ambiguous trees of size $2n-1$.
\end{theorem}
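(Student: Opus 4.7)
The plan has two parts, mirroring the two assertions of the theorem.

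\textbf{Part 1 (pyramid enumeration).} Following the template of Lemma~\ref{L:cirn}, I would apply (\ref{E:logpy}) in the monoid $\CMcal{H}(\CMcal{B}_3,\CMcal{R}_3)$. Introduce $r$ families of commuting indeterminates $\{a_{i,j}\}_{i\ge 1,\,1\le j\le r}$ with $a_{i,j}^2=0$, and define the ring homomorphism
\begin{equation*}
f:\mathbb{Z}[[\CMcal{M}(\CMcal{B}_3,\CMcal{R}_3)]]\to \mathbb{Z}[[a_{1,1},a_{2,1},\ldots,a_{1,r},a_{2,r},\ldots]],\quad f(x_{(i_1,\ldots,i_r)})=\prod_{j=1}^{r}a_{i_j,j},\quad f(1)=1.
\end{equation*}
A trivial heap $T_n\in\CMcal{T}_n(\CMcal{B}_3,\CMcal{R}_3)$ is an unordered set of $n$ pairwise non-crossing paths; since non-crossing means componentwise domination, the $n$ paths are totally ordered and $T_n$ is equivalent to $r$ independently chosen strictly increasing sequences of length $n$ (the lists of $j$-th coordinates). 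Setting $s_j=\sum_{i\ge 1}a_{i,j}$ and using $a_{i,j}^2=0$, this gives $\sum_{T_n}(-1)^n f(\varphi(T_n))=(-1)^n(s_1\cdots s_r)^n/(n!)^r$. On the pyramid side, a pyramid $P_n$ with $f(\varphi(P_n))\ne 0$ has $n$ paths with distinct values in each coordinate, so its paths form the paths of an $(r-1)$-tuple of permutations on an $n$-element set. If $q_n$ counts such pyramids with paths in $[n]^r$, then by symmetry across index sets $\sum_{P_n}f(\varphi(P_n))=q_n(s_1\cdots s_r)^n/(n!)^r$. Comparing with (\ref{E:rQ3}) yields $q_n=n\cdot r_n({\sf Q}_n^{(r)})$. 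Finally, the free action of $\mathfrak{S}_n$ on the first coordinate permutes pyramids and permutes the first coordinate of the unique maximum bijectively, so exactly $q_n/n=r_n({\sf Q}_n^{(r)})$ of these pyramids have their maximum on the path starting from $1$.

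\textbf{Part 2 (bijection with CNATs for $r=2$).} I would construct an explicit bijection $\Phi$ between the pair-pyramids counted in Part~1 and complete non-ambiguous trees of size $2n-1$. The counts are compatible: a CNAT of size $2n-1$ has $n$ leaves and $n-1$ internal nodes, and every CNAT of size $>1$ admits a unique two-child splitting at the root $(0,0)$, with one child in column $0$ above the root and one in row $0$ to the right. The natural strategy is recursive on the pair-pyramid side: given $Q_n^*$ with maximum $(1,k)$ where $k=\sigma(1)$, split it into a ``column part'' built from the $k-1$ paths crossing the maximum together with the maximum (a sub-pair-pyramid of size $k$), and a ``row part'' built from the $n-k$ remaining paths (a sub-pair-pyramid of size $n-k$); the two are then mapped recursively to CNATs of sizes $2k-1$ and $2(n-k)-1$ attached respectively at $(0,1)$ and $(1,0)$ of the CNAT root. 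The base case $n=1$ yields the singleton CNAT, and $\Phi^{-1}$ is read off from the root splitting of the CNAT.

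\textbf{Main obstacle.} The bulk of the work is in Part~2. An $L$-path $(i,\sigma(i))$ with $\sigma(i)<k$ and an $R$-path $(i',\sigma(i'))$ with $\sigma(i')>k$ may be comparable in $Q_n^*$ only via chains of crossings passing through each other's side, so the naive sub-poset restriction need not produce genuine sub-pair-pyramids. The fix is to pass through an intermediate encoding of the pair-pyramid---for instance its equivalence class of linear extensions, which corresponds in the Cartier-Foata monoid to a class of pairs of permutations obtained by reading the first and second coordinates in order---or to encode $Q_n^*$ directly by a ``crossing matrix'' from which both the splitting and the CNAT can be recovered. One then has to verify that the reassembled planar diagram satisfies the ``no empty line'' axiom~(3) of CNATs and that the inverse construction matches the recursive splitting, after which the bijection follows by induction on $n$.
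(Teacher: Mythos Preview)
Your Part~1 is essentially the paper's own argument (Lemma~\ref{L:bcm}): the same nilpotent variables $a_{i,j}$ with $a_{i,j}^2=0$, the same homomorphism, the same identification of trivial heaps with $r$-tuples of increasing sequences, and the same division by $n$ to single out pyramids whose maximum is the path starting at $1$.

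Part~2, however, has a genuine gap. Your recursive split of the pyramid by the value $k=\sigma(1)$ into a ``column part'' of size $k$ and a ``row part'' of size $n-k$ does \emph{not} correspond to the left/right subtree split of a complete non-ambiguous tree. Concretely, take the CNAT of size $5$ with root $(0,0)$, right child $(1,0)$, and $(1,0)$ having children $(1,1)$ and $(2,0)$; the left child of the root is the leaf $(0,2)$. After the shift the leaves are $(1,3),(2,2),(3,1)$, so $\sigma=321$ and $k=\sigma(1)=3$. Your rule gives a column part of size $3$ and an empty row part, whereas the actual subtree sizes are $1$ and $2$. In fact two different CNATs on the same leaf permutation $\sigma=321$ appear among the size-$5$ examples in the paper, so the bijection cannot be driven by $k=\sigma(1)$ alone. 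The obstacle you flag (that the restricted sub-posets need not be pyramids) is real for the row part, but the size mismatch is already fatal and your suggested workarounds via linear extensions or a crossing matrix do not address it.

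The paper's bijection (Lemma~\ref{L:bicm}) avoids this by decomposing in the other direction. Going CNAT $\to$ pyramid, one recurses on the left and right subtrees and then \emph{stacks} the resulting pyramids according to whether the current subtree sits as a left or a right child in its parent; this is what lets two CNATs with the same leaf permutation produce different heap orders. For the inverse, the key is not to split by the value $\sigma(1)$ but to take the principal order ideal below the path whose \emph{second} coordinate is minimal (namely $(\sigma^{-1}(1),1)$); this is automatically a sub-pyramid, and its complement is again a pyramid with maximum $(1,\sigma(1))$. These two pieces then recurse to the right and left subtrees of the CNAT, with sizes determined by the heap structure rather than by $\sigma(1)$.
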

We divide Theorem~\ref{T:rnQn} into Lemma~\ref{L:bcm} and Lemma~\ref{L:bicm}. We shall prove Lemma~\ref{L:bcm} again by using the connection between $r_n({\sf Q}_n^{(r)})$ and heaps and prove Lemma~\ref{L:bicm} by a bijection.
\subsection{Connection between $r_n({\sf Q}_n^{(r)})$ and heaps}\label{s:type2notation}

Now we first establish a connection between $r_n({\sf Q}_n^{(r)})$ and the set of pyramids $\CMcal{P}(\CMcal{B}_3,\CMcal{R}_3)$ via (\ref{E:logpy}).
\begin{lemma}\label{L:bcm}
$r_n({\sf Q}_n^{(r)})$ counts the number of pyramids $Q_n^*$ in $\CMcal{P}(\CMcal{B}_3,\CMcal{R}_3)$ such that $Q_n^*$ has $n$ elements which are exactly the paths of an $(r-1)$-tuple of permutations in $\mathfrak{S}_{n}$ and the unique maximal element of $Q_n^*$ is the path starting from $1$.
\end{lemma}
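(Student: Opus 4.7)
The plan is to mirror the proof of Lemma~\ref{L:cirn}, replacing blocks by paths and the single family of nilpotent variables by $r$ independent families, one per coordinate of a path. First apply the Cartier--Foata identity~(\ref{E:logpy}) inside the monoid $\CMcal{H}(\CMcal{B}_3,\CMcal{R}_3)$, then push it forward to a commutative ring of formal power series via the ring homomorphism
\[
f:\mathbb{Z}[[\CMcal{M}(\CMcal{B}_3,\CMcal{R}_3)]]\longrightarrow \mathbb{Z}[[a_i^{(j)}:i\ge 1,\,1\le j\le r]],
\]
defined by $f(x_{(i_1,\ldots,i_r)})=\prod_{j=1}^r a_{i_j}^{(j)}$ and $f(1)=1$, where $(a_i^{(j)})^2=0$ and all of the $a_i^{(j)}$ commute. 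Writing $A^{(j)}=\sum_{i\ge 1}a_i^{(j)}$, the nilpotency relations will collapse any sum over strictly increasing sequences of length $n$ in the $j$-th column into $(A^{(j)})^n/n!$.

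A trivial heap $T_n\in\CMcal{T}_n(\CMcal{B}_3,\CMcal{R}_3)$ is a multiset of $n$ pairwise non-crossing paths; ordering them by first coordinate forces every one of the $r$ columns to be strictly increasing, so $T_n$ is encoded uniquely by an $r$-tuple of strictly increasing integer sequences of length $n$. This gives
\[
\sum_{T_n}(-1)^n f(\varphi(T_n))=(-1)^n\prod_{j=1}^r\frac{(A^{(j)})^n}{n!}=(-1)^n\frac{\prod_{j=1}^r(A^{(j)})^n}{(n!)^r},
\]
and summing over $n$, combined with~(\ref{E:rQ3}) under the formal substitution $z=\prod_j A^{(j)}$, rewrites the right-hand side of~(\ref{E:logpy}) as $\sum_{n\ge 1}r_n({\sf Q}_n^{(r)})\prod_j(A^{(j)})^n/(n!)^r$. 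On the pyramid side, $f(\varphi(P_n))\neq 0$ forces the $n$ paths of $P_n$ to use distinct values in each column; writing $S_j$ for this set of values, $|S_j|=n$ and $f(\varphi(P_n))=\prod_j\prod_{c\in S_j}a_c^{(j)}$. Since the crossing relation on paths depends only on the relative orders within each column, the number of pyramids with prescribed column sets $(S_1,\ldots,S_r)$ is the same for every choice with $|S_j|=n$; denote this number by $q_{n,r}$, which equals the number of pyramids whose $n$ elements are the paths of some $(r-1)$-tuple of permutations in $\mathfrak{S}_n$. Consequently
\[
\sum_{P_n} f(\varphi(P_n))=q_{n,r}\prod_{j=1}^r\frac{(A^{(j)})^n}{n!},
\]
and after dividing by $|P_n|=n$ and summing, the left-hand side of~(\ref{E:logpy}) becomes $\sum_n(q_{n,r}/n)\prod_j(A^{(j)})^n/(n!)^r$. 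Matching coefficients of the monomials $\prod_j(A^{(j)})^n$ yields $q_{n,r}/n=r_n({\sf Q}_n^{(r)})$.

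Finally, as in the closing sentence of Lemma~\ref{L:cirn}, I would convert the global count $q_{n,r}$ into the count of pyramids whose unique maximum is specifically the path starting from $1$. The assertion is that for every $m\in[n]$ the number of pyramids in $\CMcal{P}_n(\CMcal{B}_3,\CMcal{R}_3)$ whose maximum has first coordinate $m$ is the same, so each of these $n$ counts equals $q_{n,r}/n=r_n({\sf Q}_n^{(r)})$. This uniformity step is where I expect the main obstacle to lie: naive relabelings of $[n]$ in a single column do \emph{not} preserve the crossing relation, which is order-sensitive within each column, so the symmetry cannot be obtained from a simple transposition bijection and must instead be extracted by the same averaging mechanism implicit at the end of Lemma~\ref{L:cirn}, exploiting the joint action of column-wise order-preserving relabelings with a global relabeling of the abstract set of $n$ paths.
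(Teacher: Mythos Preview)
Your argument is essentially identical to the paper's: the same nilpotent homomorphism (the paper writes $a_{i,j}$ for your $a_i^{(j)}$), the same identification of trivial heaps with $r$-tuples of strictly increasing sequences, and the same conclusion $r_n({\sf Q}_n^{(r)})=q_{n,r}/n$ (the paper calls this quantity $\bar p_{n,r}/n$). On the final step the paper is just as terse as you feared, writing only ``Since every pyramid \ldots\ has $n$ elements, it follows that \ldots''.

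Your instinct that a column relabeling cannot supply the symmetry is correct, but no relabeling is needed: the equidistribution already holds for each fixed piece-set $S$ and is a formal consequence of~(\ref{E:logpy}) itself. Write $D=\sum_T(-1)^{|T|}\varphi(T)$ in the commutative extension and differentiate with respect to the variable $x_t$ of a fixed piece $t\in S$. One gets $\partial_{x_t}(-\log D)=-\partial_{x_t}D/D$; since $-\partial_{x_t}D$ is the alternating sum over trivial heaps disjoint from the $\CMcal{R}$-neighbourhood of $t$, the quotient is (by the heap inversion formula underlying~(\ref{E:CF2})) the generating series for heaps whose maximal pieces are all $\CMcal{R}$-related to $t$. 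Its coefficient at $\prod_{s\in S\setminus\{t\}}x_s$ therefore counts heaps on $S\setminus\{t\}$ with every maximum crossing $t$, i.e.\ pyramids on $S$ with maximum $t$. On the left side of~(\ref{E:logpy}), because each $x_s$ occurs to the first power in $\prod_{s\in S}x_s$, the same coefficient of $\partial_{x_t}$ simply returns $[\prod_{s\in S}x_s](-\log D)=q_{n,r}/n$, independently of $t$. Taking $t$ to be the path with first coordinate $1$ then gives the lemma; your proposed ``joint relabeling'' mechanism is not the right tool here.
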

\begin{proof}
From (\ref{E:logpy}) we know
\begin{equation}\label{E:c1}
\sum_{P\in \CMcal{P}(\CMcal{B}_3,\CMcal{R}_3)}\frac{\varphi(P)}{\vert P\vert} =_{\mbox{\scriptsize{comm}}} -\log(\sum_{T\in
\CMcal{T}(\CMcal{B}_3,\CMcal{R}_3)}(-1)^{\vert T\vert}\varphi(T)).
\end{equation}
We now consider the Cartier-Foata monoid $\CMcal{M}(\CMcal{B}_3,\CMcal{R}_3)$.
Let $\{a_{i,j}\}_{i\ge 1,j\ge 1}$ be a sequence of variables such that
$a_{i,j}^2=0$ for every $i,j$ and $a_{i,j}a_{k,\ell}=a_{k,\ell}a_{i,j}$ for every $i,j,k,\ell$. Furthermore let $h:\mathbb{Z}[[\CMcal{M}(\CMcal{B}_3,\CMcal{R}_3)]]\rightarrow
\mathbb{Z}[[a_{1,1},a_{1,2},\ldots]]$ be a ring homomorphism such that
$h(x_{(i_1,i_2,\ldots,i_r)})=\prod_{j=1}^r a_{i_j,j}$ and $h(1)=1$. We apply $h$ on
both sides of (\ref{E:c1}) and obtain
\begin{equation}\label{E:c2}
\sum_{P\in \CMcal{P}(\CMcal{B}_3,\CMcal{R}_3)}\frac{h(\varphi(P))}{\vert P\vert}=-\log(\sum_{T\in
\CMcal{T}(\CMcal{B}_3,\CMcal{R}_3)}(-1)^{\vert T \vert}h(\varphi(T))).
\end{equation}
Let $\CMcal{T}_n(\CMcal{B}_3,\CMcal{R}_3)$ be the set of trivial heaps of
size $n$ contained in the set $\CMcal{T}(\CMcal{B}_3,\CMcal{R}_3)$ and suppose that $T_n\in\CMcal{T}_n(\CMcal{B}_3,\CMcal{R}_3)$ is a trivial heap
having $n$ paths $(i_1,\ldots,i_r),(i_{r+1},\ldots,i_{2r})$,
$\ldots$, $(i_{rn-r+1},\ldots,i_{rn})$ such that $i_1<i_{r+1}<\cdots<i_{rn-r+1}$. Then
the sequence $I_{T_n,j}=i_j,i_{r+j},\ldots,i_{nr-r+j}$ is a strictly increasing sequence for every $j$, $1\le j\le r$. It follows that $T_n\mapsto I_{T_n}^*$ where
$I_{T_n}^*=(I_{T_n,1},\ldots,I_{T_n,r})$ and
$I_{T_n,j}=i_j,i_{r+j},\ldots,i_{nr-r+j}$ is a bijection between the set
of trivial heaps of size $n$ and the set of $r$-tuples of strictly
increasing sequences of length $n$. This gives
\begin{align*}
\sum_{T_n\in\CMcal{T}_n(\CMcal{B}_3,\CMcal{R}_3)}
(-1)^nh(\varphi(T_n))&=\sum_{I_{T_n}^*}(-1)^n\prod_{m=1}^{n}
\prod_{j=1}^{r}a_{i_{rm-r+j},j}\\
&=(-1)^n\prod_{j=1}^{r}\frac{(a_{1,j}+a_{2,j}+\cdots)^{n}}{n!}
=\frac{(-1)^n}{n!^r}(\prod_{j=1}^{r}(\sum_{i=1}^{\infty}
a_{i,j}))^{n}
\end{align*}
where the second summation runs over all the $r$-tuples $I_{T_n}^*$
of strictly increasing sequences of length $n$ and the last two
equations hold because $a_{i,j}^2=0$ for every $i,j$. It follows immediately
that the right hand side of (\ref{E:c2}) is
\begin{align*}
-\log(\sum_{T\in \CMcal{T}(\CMcal{B}_3,\CMcal{R}_3)}(-1)^{\vert T\vert}h(\varphi(T)))&=-\log(\sum_{n\ge
0}\sum_{T_n\in \CMcal{T}_n(\CMcal{B}_3,\CMcal{R}_3)}
(-1)^nh(\varphi(T_n)))\\
&=-\log(\sum_{n\ge
0}\frac{(-1)^n}{n!^r}(\prod_{j=1}^{r}(\sum_{i=1}^{\infty}
a_{i,j}))^{n}).
\end{align*}
On the other hand, let $\CMcal{P}_n(\CMcal{B}_3,\CMcal{R}_3)$ be the set
of pyramids of size $n$ contained in the set
$\CMcal{P}(\CMcal{B}_3,\CMcal{R}_3)$. Suppose that $P_n\in
\CMcal{P}_n(\CMcal{B}_3,\CMcal{R}_3)$ is a pyramid of size $n$ such that
$h(\varphi(P_{n}))\ne 0$, then the elements of the poset $P_n$ are exactly the paths of an $(r-1)$-tuple $\sigma=(\sigma_1,\ldots,\sigma_{r-1})$ of bijections such that $\sigma_j$ is a bijection between two
sets of $n$ integers for every $j$, and every path of $\sigma$ is crossing with at least one other path of $\sigma$. We continue
using $I_{T_n}^*=(I_{T_n,1},\ldots,I_{T_n,r})$ to represent any
$r$-tuple of strictly increasing sequences of length $n$. A strictly
increasing sequence $I_{T_n,j}=i_{j},i_{j+r},\ldots,i_{nr-r+j}$ uniquely corresponds with a set $S_{T_n,j}=\{i_{j},i_{j+r},\ldots,i_{nr-r+j}\}$. For a given $r$-tuple $I_{T_n}^*$ of strictly increasing sequences, let
$\bar{p}_{n,r}$ count the number of pyramids $P_n$ such that the elements of $P_n$ are exactly the paths of an $(r-1)$-tuple $\sigma=(\sigma_1,\ldots,\sigma_{r-1})$ of bijections such that $\sigma_j$ is a bijection between the set $S_{T_n,j}$ and $S_{T_n,j+1}$ for every $j$. Then we have
\begin{align*}
\sum_{P_n\in
\CMcal{P}_n(\CMcal{B}_3,\CMcal{R}_3)}h(\varphi(P_n))=\sum_{I_{T_n}^*}\bar{p}_{n,r}
\prod_{m=1}^{n}\prod_{j=1}^{r}a_{i_{rm-r+j},j}
\end{align*}
Note that the number $\bar{p}_{n,r}$ is independent of the choice
of the $r$-tuple $I_{T_n}^*$ of strictly increasing sequences and we choose the set $S_{T_n,j}=\{1,2,\ldots,n\}$ for every $j$. It turns out the left hand side of (\ref{E:c2}) is
\begin{align*}
\sum_{P\in \CMcal{P}(\CMcal{B}_3,\CMcal{R}_3)}\frac{h(\varphi(P))}{\vert P\vert}&=\sum_{n\ge 0}\sum_{P_n\in
\CMcal{P}_n(\CMcal{B}_3,\CMcal{R}_3)}
\frac{h(\varphi(P_n))}{n}=\sum_{n\ge 0}\sum_{I_{T_n}^*}\frac{\bar{p}_{n,r}}{n}
\prod_{m=1}^{n}\prod_{j=1}^{r}a_{i_{rm-r+j},j}\\
&=\sum_{n\ge 0}\frac{\bar{p}_{n,r}}{n}\frac{1}{n!^r}
(\prod_{j=1}^{r}(\sum_{i=1}^{\infty} a_{i,j}))^{n}.
\end{align*}
In view of (\ref{E:rQ3}), we conclude that
$r_n({\sf Q}_n^{(r)})=n^{-1}\bar{p}_{n,r}$.
Since every pyramid in $\CMcal{P}(\CMcal{B}_3,\CMcal{R}_3)$ has $n$ elements, it follows that $r_n({\sf Q}_n^{(r)})$ counts the number of pyramids $Q_n^*$ in $\CMcal{P}(\CMcal{B}_3,\CMcal{R}_3)$ such that $Q_n^*$ has $n$ elements which are exactly the paths of an $(r-1)$-tuple of permutations in $\mathfrak{S}_{n}$ and the maximal element of $Q_n^*$ is the path starting at $1$.
The proof is complete.
\end{proof}
For example, $r_2({\sf Q}_2^{(3)})=3$ counts the pyramid $\{(1,2,1),(2,1,2)\}$ such that $(2,1,2)\le(1,2,1)$, the pyramid $\{(1,2,2),(2,1,1)\}$ such that $(2,1,1)\le (1,2,2)$, and the pyramid $\{(1,1,2),(2,2,1)\}$ such that $(2,2,1)\le (1,1,2)$.

Let $b_{n}$ be the number of complete non-ambiguous trees of size
$2n-1$, Aval {\it et al.} proved the integers $b_{n}$ satisfy
(\ref{E:rQ3}) when $r=2$. This implies that $r_n({\sf Q}_n^{(2)})=b_{n}$.
We will prove that $r_n({\sf Q}_n^{(2)})=b_{n}$ by a bijection in Lemma~\ref{L:bicm}.
\begin{lemma}\label{L:bicm}
There is a bijection between the set of complete non-ambiguous
trees of size $2n-1$ and the set of pyramids $Q_n^*$ in $\CMcal{P}(\CMcal{B}_3,\CMcal{R}_3)$ such that
$Q_n^*$ has $n$ elements which are exactly the paths of a permutation $\sigma\in \mathfrak{S}_n$ and the unique
maximal element of $Q_n^*$ is the path $(1,\sigma(1))$.
\end{lemma}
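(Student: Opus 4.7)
My plan is to construct the bijection $\psi$ by induction on $n$, guided by the following key observation: for a pyramid $Q_n^*$ of paths of some $\sigma\in\mathfrak{S}_n$, the $n$ paths $(i,\sigma(i))$ correspond naturally to the $n$ leaves of the resulting complete non-ambiguous tree via the map $(i,\sigma(i))\mapsto (i-1,\sigma(i)-1)$. Since $\sigma$ is a permutation, these $n$ points occupy distinct rows and distinct columns of $\{0,1,\ldots,n-1\}^2$, and the $n-1$ internal vertices (including the root $(0,0)$) will be determined by the Hasse structure of $Q_n^*$.

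The base case $n=1$ is immediate: the singleton pyramid $\{(1,1)\}$ maps to the single-vertex tree $\{(0,0)\}$. For the inductive step ($n\ge 2$), I first observe that the maximum $M=(1,\sigma(1))$ corresponds to the unique leaf in column $0$, located at $(0,\sigma(1)-1)$; this leaf lies at the end of the right spine of the tree. I then decompose the remaining $n-1$ paths into a left and a right sub-pyramid, according to whether their corresponding leaves will lie in the left or right subtree of the root of the eventual tree. Each sub-pyramid is re-indexed by applying order-preserving bijections on its row and column indices onto contiguous ranges $\{1,\ldots,n_1\}$ and $\{1,\ldots,n_2\}$, making it a genuine pyramid of smaller size whose maximum has the form $(1,\sigma'(1))$, so that the inductive hypothesis applies. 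Composing the two resulting sub-trees with the root $(0,0)$ and embedding them back into the grid $\{0,\ldots,n-1\}^2$ yields $\psi(Q_n^*)$. For the inverse, one reads $\sigma$ from the leaf positions, decomposes the tree at its root into left and right subtrees, inverts each (after compressing the ambient grid), and reconstructs the partial order of $Q_n^*$ by placing $M$ above the reassembled sub-pyramids with the induced cover relations.

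The main obstacle will be to characterize, purely in terms of the Hasse structure of $Q_n^*$, the decomposition of paths that corresponds to the left/right split of the tree at its root, and to verify that (i) the compressed sub-pyramids are always pyramids of the form required by the inductive hypothesis, (ii) the sub-trees, when shifted and embedded into $\{0,\ldots,n-1\}^2$, yield a tree satisfying the no-empty-line condition of non-ambiguous trees, and (iii) $\psi$ and $\psi^{-1}$ are mutually inverse. A careful case analysis of the covering structure around $M$ will be required, distinguishing in particular whether $M$ covers a single element or several pairwise-incomparable elements of $Q_n^*$.
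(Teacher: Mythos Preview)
Your high-level picture is correct and matches the paper's: after the shift $(x_1,x_2)\mapsto(x_1+1,x_2+1)$, the $n$ leaves of the complete non-ambiguous tree are exactly the paths $(i,\sigma(i))$, and the bijection is built by recursively splitting at the root. But the step you yourself flag as ``the main obstacle'' is not just a detail to fill in later; the form of your induction hypothesis makes it impossible to close as stated. When the tree is split at the root $(1,1)$ into the left subtree $B_1$ and the right subtree $B_2$, the sub-pyramid coming from $B_1$ does have its maximum at the leaf of smallest first coordinate (namely $M=(1,\sigma(1))$ itself), so after re-indexing it looks like $(1,\sigma'(1))$. The sub-pyramid coming from $B_2$, however, has its maximum at the leaf of smallest \emph{second} coordinate, namely $(\sigma^{-1}(1),1)$; an order-preserving re-indexing of the coordinates cannot move this maximum to position $(1,\cdot)$. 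So a uniform hypothesis ``max at $(1,\sigma'(1))$'' will not apply to both halves. The paper resolves this with a non-uniform recursion on subtrees $B$ rooted at $(a,b)$: the pyramid ${\sf q}(B)$ has maximum $(a,\sigma(a))$ when $B$ is a left child (or $B=A$) and maximum $(\sigma^{-1}(b),b)$ when $B$ is a right child, and the two cases stack ${\sf q}(B_1)$ and ${\sf q}(B_2)$ in opposite orders. The inverse is then completely explicit, with no case analysis on how many elements $M$ covers: locate the element of $Q_n^*$ with minimal second coordinate, take its down-set as $Q_2$, and let $Q_1$ be the rest.

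Two smaller slips to fix once you adopt that framework. First, in the paper's convention a left child keeps $x_1$ fixed and increases $x_2$, so the leaf $(0,\sigma(1)-1)$ lies at the end of the \emph{left} spine, not the right spine. Second, do not remove $M$ before splitting: $M$ belongs to the left sub-pyramid $Q_1$, and $Q_1\setminus\{M\}$ need not be a pyramid (it can have several maximal elements). Keeping $M$ inside $Q_1$ is exactly what makes $Q_1$ a pyramid whose maximum is again of the form $(a,\sigma(a))$, feeding the recursion.
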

\begin{proof}
Given a complete non-ambiguous tree $A$ of size $2n-1$, we first do
a coordinate translation. Let $A\mapsto \mu(A)$ be a bijection such that
for any $v\in A$ having coordinate $x(v)=(x_1(v),x_2(v))$, we set
$\mu(A)=\{(x_1(v)+1,x_2(v)+1):v\in A\}$. Recall that $T_A$ is the
unique underlying tree associated to $A$. Thus via bijection $\mu$, we shift the tree $T_A$ rooted at $(0,0)$ to the tree
$T_{\mu(A)}$ rooted at $(1,1)$. Next we consider the coordinates
of the leaves of $T_{\mu(A)}$. From the definition of complete
non-ambiguous tree, we know the coordinates of the leaves of
$T_{\mu(A)}$ must be
$$(1,\sigma(1)),(2,\sigma(2))\cdots,(n,\sigma(n)),$$
where $\sigma\in\mathfrak{S}_n$ and we now prove that we can choose them in such a way that they are the elements of a pyramid
$Q_n^*$. It remains to define an order $\le$ of the elements in the pyramid $Q_n^*$.

Let $B$ be any subset of $A$ such that the underlying tree $T_{\mu(B)}$ is a subtree of $T_{\mu(A)}$. The correspondence ${\sf q}(B)$ is defined inductively. Suppose $B$ is rooted at $(a,b)$ and let $B_1,B_2$ be two subsets of $B$ such that $T_{\mu(B_1)}$ is the left subtree and $T_{\mu(B_2)}$ is the right subtree of $T_{\mu(B)}$. Then for $i=1,2$, the leaves of $T_{\mu(B_i)}$ can be identified as a permutation $\bar{\sigma}_i$ which is $\sigma$ restricted to a subset of $[n]$. Clearly $(a,\sigma(a))\in B_1$, $(\sigma^{-1}(b),b)\in B_2$ and the two paths $(a,\sigma(a)),(\sigma^{-1}(b),b)$ are crossing. By induction, ${\sf q}(B_1)$ is a pyramid such that the unique maximal element is $(a,\sigma(a))$, ${\sf q}(B_2)$ is a pyramid such that the unique maximal element is $(\sigma^{-1}(b),b)$ and all the elements of ${\sf q}(B_i)$ are exactly the paths of $\bar{\sigma}_i$ for $i=1,2$. We use $\vert \bar{\sigma}_i\vert$ to denote the number of paths in $\bar{\sigma}_i$ for $i=1,2$.

If $T_{\mu(B)}$ is the left subtree of its parent in $T_{\mu(A)}$ or $B=A$, then, for every element $x$ in the pyramid ${\sf q}(B_2)$, suppose that $y_x$ is one of the minimal elements in the pyramid ${\sf q}(B_1)$ that is crossing with $x$ but not crossing with any $z$ such that $x\le z$ in the pyramid ${\sf q}(B_2)$. Thus the pyramid ${\sf q}(B)$ is obtained by letting $y_x$ cover $x$ for every $x$ and $y_x$. In particular, $(\sigma^{-1}(b),b)\le (a,\sigma(a))$ holds in the pyramid ${\sf q}(B)$ which implies that the unique maximal element of ${\sf q}(B)$ is $(a,\sigma(a))$.

If $T_{\mu(B)}$ is the right subtree of its parent in $T_{\mu(A)}$, then, for every path $y$ in the pyramid ${\sf q}(B_1)$, suppose $x_y$ is one of the minimal elements in the pyramid ${\sf q}(B_2)$ that is crossing with $y$ but not crossing with any $z$ such that $y\le z$ in the pyramid ${\sf q}(B_1)$. Thus, the pyramid ${\sf q}(B)$ is obtained by letting $x_y$ cover $y$ for every $y$ and $x_y$. In particular, $(a,\sigma(a))\le(\sigma^{-1}(b),b)$ holds in the pyramid ${\sf q}(B)$ which implies that the unique maximal element of ${\sf q}(B)$ is $(\sigma^{-1}(b),b)$.

In fact, this gives us an inductive process to successively construct the pyramid ${\sf q}(A)$ whose unique maximal element is $(1,\sigma(1))$ and ${\sf q}(A)$ has $n$ elements which are exactly the paths of $\sigma\in \mathfrak{S}_n$. Conversely, without loss of generality, consider a pyramid $Q_n^*$ such that the elements of $Q_n^*$ are exactly the paths of the permutations $\bar{\sigma}_1,\bar{\sigma}_2$ and the unique maximal element is $(a,\sigma(a))$. Let $d$ be the minimal integer such that $d_1\ge d$ for all the elements $(c_1,d_1)\in Q_n^*$,
then $d=b$ and we consider the pyramid $Q_2$ that is induced by the element $(\sigma^{-1}(b),b)$, i.e., $Q_2$ contains all the elements $x$ such that $x\le (\sigma^{-1}(b),b)$ in the pyramid $Q_n^*$. Let $Q_1$ be the pyramid of remaining elements from $Q_n^*$. Then $Q_i={\sf q}(B_i)$, for $i=1,2$, which implies the above inductive process is bijective and therefore the proof is complete.
\end{proof}
See Figure~\ref{F:7} for an example.
\begin{figure}[htbp]
\begin{center}
\includegraphics[scale=0.7]{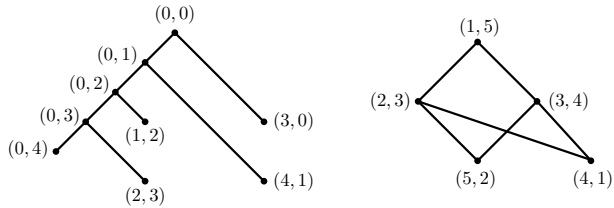}
\caption{A complete non-ambiguous tree of size $9$ (left) and the corresponding pyramid
$Q_5^*$ whose unique maximal element is the path $(1,5)$ (right). The pyramid $Q_5^*$ has $5$ elements which are exactly the paths of the permutation $\sigma=53412$.
\label{F:7}}
\end{center}
\end{figure}

\subsection{From heaps to pairs of permutations}\label{S:forAval}
A non-ambiguous forest introduced by Aval {\it et al.} \cite{A:13}
is a set of points $x(v)=(x_1(v),x_2(v))\in\mathbb{N}\times
\mathbb{N}$ satisfying the following conditions:
\begin{enumerate}
\item for a given non-root point $p\in A$, the set
$\{s\in A: x_1(s)<x_1(p),x_2(s)=x_2(p)\}\cup \{q\in A:
x_2(q)<x_2(p),x_1(q)=x_1(p)\}$ has at most $1$ element.
\item there is no empty line between two given points: if there exists
a point $p\in A$ such that $x_1(p)=x$ (resp. $x_2(p)=y$), then for
every $x'<x$ (resp. $y'<y$) there exists $q\in A$ such that
$x_1(q)=x'$ (resp. $x_2(q)=y'$).
\end{enumerate}
In the same way, a non-ambiguous forest has an underlying binary
forest structure. A complete non-ambiguous forest is a non-ambiguous
forest such that all trees in the underlying binary forest are
complete. For instance, there are $4$ complete non-ambiguous forests that
contain leaves $(0,1),(1,2),(2,0)$, which are
\begin{center}
\setlength{\unitlength}{3.5pt}
\begin{picture}(30,18)(32,-14)
\put(10,0){\circle*{0.8}}\put(10,0){\line(-1,-1){4}}
\put(10,0){\line(1,-1){8}}\put(14,-4){\circle*{0.8}}
\put(6,-4){\circle*{0.8}}\put(18,-8){\circle*{0.8}}
\put(14,-4){\line(-1,-1){8}}\put(6,-12){\circle*{0.8}}
\put(7,1){\scriptsize{$(0,0)$}}\put(0,-3){\scriptsize{$(0,1)$}}
\put(14,-3){\scriptsize{$(1,0)$}}\put(18,-7){\scriptsize{$(2,0)$}}
\put(8,-12){\scriptsize{$(1,2)$}}
\put(31,-4){\circle*{0.8}}\put(43,-8){\circle*{0.8}}
\put(31,-12){\circle*{0.8}} \put(27,-2){\scriptsize{$(0,1)$}}
\put(40,-6){\scriptsize{$(2,0)$}} \put(33,-12){\scriptsize{$(1,2)$}}
\put(64,-4){\line(1,-1){4}} \put(64,-4){\line(1,-1){4}}
\put(64,-4){\circle*{0.8}}
\put(56,-4){\circle*{0.8}}\put(68,-8){\circle*{0.8}}
\put(64,-4){\line(-1,-1){8}}\put(56,-12){\circle*{0.8}}
\put(53,-2){\scriptsize{$(0,1)$}}
\put(61,-2){\scriptsize{$(1,0)$}}\put(68,-7){\scriptsize{$(2,0)$}}
\put(58,-12){\scriptsize{$(1,2)$}}
\put(85,0){\circle*{0.8}}\put(85,0){\line(-1,-1){4}}
\put(85,0){\line(1,-1){8}}
\put(81,-4){\circle*{0.8}}\put(93,-8){\circle*{0.8}}
\put(81,-12){\circle*{0.8}}
\put(82,1){\scriptsize{$(0,0)$}}\put(75,-3){\scriptsize{$(0,1)$}}
\put(93,-7){\scriptsize{$(2,0)$}} \put(83,-12){\scriptsize{$(1,2)$}}
\end{picture}
\end{center}
We denote by $\tau(n)$ the number of complete
non-ambiguous forests with $n$ leaves. Then Aval {\it et al.}
\cite{A:13} proved that
\begin{eqnarray}\label{E:comnaf}
\sum_{n\ge 0}\tau(n)\frac{x^n}{n!^2}=(\sum_{n\ge 0}\frac{(-1)^n
x^n}{n!^2})^{-1}.
\end{eqnarray}
In addition, Carlitz {\it et al.} \cite{Carlitz} proved the same
identity by enumerating the pairs of permutations with no common
rise. Let $\omega(n)$ count the number of pairs $(\pi,\xi)$ of
permutations of $[n]$ such that any two consecutive entries cannot
be rising both in $\pi$ and $\xi$, i.e., there is no integer $i$
such that $\pi(i)<\pi(i+1)$ and $\xi(i)<\xi(i+1)$. Carlitz {\it et
al.} \cite{Carlitz} showed that
\begin{eqnarray*}
\sum_{n\ge 0}\omega(n)\frac{x^n}{n!^2}=(\sum_{n\ge
0}\frac{(-1)^n x^n}{n!^2})^{-1}.
\end{eqnarray*}
This implies that $\tau(n)=\omega(n)$ and Aval {\it et al.} asked for a bijective proof of $\tau(n)=\omega(n)$. Here we will give a bijection between the set of complete non-ambiguous forests with $n$
leaves and the set of pairs $(\pi,\xi)$ of permutations
$\pi,\xi\in\mathfrak{S}_n$ with no common rise by using heaps
as intermediate objects.
\begin{theorem}
There is a bijection between the set of complete non-ambiguous
forests with $n$ leaves and the set of pairs $(\pi,\xi)$ of
permutations of $[n]$ with no common rise.
\end{theorem}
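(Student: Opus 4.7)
The plan is to build the bijection via heaps as intermediate objects, mirroring the strategy used for trees in Lemma~\ref{L:bicm}. I would compose two bijections: first, $\Phi$ from complete non-ambiguous forests with $n$ leaves to the set of heaps $H\in\CMcal{H}(\CMcal{B}_3,\CMcal{R}_3)$ whose $n$ elements are exactly the paths of some permutation $\sigma\in\mathfrak{S}_n$; then $\Psi$ from such heaps to pairs $(\pi,\xi)\in\mathfrak{S}_n\times\mathfrak{S}_n$ with no common rise.

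For $\Phi$, the idea is to extend the recursive construction in the proof of Lemma~\ref{L:bicm}. A complete non-ambiguous forest decomposes into a disjoint union of complete non-ambiguous trees, and Lemma~\ref{L:bicm} attaches to each tree a pyramid on its leaf-paths. To assemble these pyramids into a single heap I would specify the relative order of leaf-paths coming from different trees by reading the trees in a canonical order (e.g.\ by the grid coordinates of their roots) and composing the corresponding pyramids using the monoid composition $\circ$ from Definition~\ref{D:circ}. The obligations are to check that the resulting structure satisfies both axioms of Definition~\ref{D:heap}, that a path-pair from different trees which crosses ends up comparable in the correct direction, and that $\Phi$ is invertible by reading off from the heap which paths belong to a common "tree component" and reconstructing the forest via the tree case of Lemma~\ref{L:bicm}.

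For $\Psi$, I would define a canonical natural labeling of $H$: at each step, among the minimal elements of the remaining subposet, pick the one whose path coordinates are lexicographically largest. This rule is well-defined because any two minimal elements are pairwise incomparable, hence non-crossing, hence comparable in both coordinates simultaneously. Listing the chosen elements as $\epsilon_1,\ldots,\epsilon_n$ with $\epsilon_i=(a_i,b_i)$, I set $\pi(i)=a_i$ and $\xi(i)=b_i$. If $\epsilon_i,\epsilon_{i+1}$ are comparable they cross, so exactly one coordinate rises and the other falls; if they are incomparable the canonical rule forces both coordinates to decrease. Either way there is no common rise at position $i$. For the inverse, given $(\pi,\xi)$ with no common rise, I would define a poset on $\{(\pi(i),\xi(i))\}_{i\in[n]}$ as the transitive closure of the relation $\epsilon_i\prec\epsilon_j$ whenever $i<j$ and the two paths cross, verify that the covering condition of Definition~\ref{D:heap} then holds automatically, and check that the canonical labeling of the resulting heap reproduces $(\pi,\xi)$.

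The main difficulty I anticipate is $\Phi$: assembling per-tree pyramids into a single heap requires a coherent rule for handling crossings between paths of different trees, and the rule must be reversible in the sense that the heap retains enough information to recover which paths belonged to the same tree. The bijection $\Psi$, by contrast, should follow cleanly from the canonical-labeling argument, together with the elementary observation that the minimal elements of a heap in $\CMcal{H}(\CMcal{B}_3,\CMcal{R}_3)$ are totally ordered coordinatewise.
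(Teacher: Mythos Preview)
Your proposal is correct and follows essentially the same two-step route as the paper: a bijection ${\sf s}$ from complete non-ambiguous forests to heaps in $\CMcal{H}(\CMcal{B}_3,\CMcal{R}_3)$ (obtained by applying Lemma~\ref{L:bicm} to each tree and composing the resulting pyramids in the order of their roots' first coordinates, with the inverse recovered by successively peeling off the pyramid below the remaining element of smallest first coordinate), followed by a bijection between such heaps and pairs $(\pi,\xi)$ via a canonical linear extension with a coordinate tie-break. The only cosmetic difference is orientation: the paper reads the heap top-down (maximal first, larger first coordinate among incomparables) while you read it bottom-up (minimal first, lexicographically largest among current minimals); both choices give a pair with no common rise by the same argument, since consecutive picks are either a cover (hence crossing, so exactly one coordinate rises) or were simultaneously extremal (hence non-crossing, and the tie-break forces both coordinates to fall).
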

\begin{proof}
We first define the number $h_{n,r}$ by the equation
\begin{align}\label{E:rQ4}
\sum_{n\ge 0}h_{n,r}\frac{x^n}{n!^r}=(\sum_{n\ge 0}\frac{(-1)^n
x^n}{n!^r})^{-1}.
\end{align}
Similar to the proof of Lemma~\ref{L:bcm}, via (\ref{E:CF2}) we can prove that
$h_{n,r}$ counts the number of heaps $H_n^*$ in $\CMcal{H}(\CMcal{B}_3,\CMcal{R}_3)$ such that $H_n^*$ has $n$ elements which are exactly the paths of an $(r-1)$-tuple of permutations in $\mathfrak{S}_n$. We will first show that $\tau(n)=h_{n,2}$ by a bijection.

Given a complete non-ambiguous forest $A$ with $n$ leaves, let $v\in A$ have coordinates $(x_1(v),x_2(v))$ and we again obtain the set $\mu(A)=\{(x_1(v)+1,x_2(v)+1):v\in A\}$ via
the bijection $\mu$ given in Lemma~\ref{L:bicm}.
Recall that $T_{\mu(A)}$ is the underlying forest associated with the
vertex set $\mu(A)$. From the definition we know the coordinates of the leaves of
$T_{\mu(A)}$ must be
$$(1,\sigma(1)),(2,\sigma(2)),\ldots,(n,\sigma(n)),$$
where $\sigma\in\mathfrak{S}_n$ and we now prove that we can choose them in such a way that they are the elements of a heap $H_n^*$. It remains to define the order $\le$ of the elements of the heap $H_n^*$.

The correspondence ${\sf s}(A)$ is defined as follows. Suppose the forest $T_{\mu(A)}$ contains exactly $k$ complete non-ambiguous trees $T_{\mu(B_1)},T_{\mu(B_2)},\ldots,T_{\mu(B_k)}$ that are associated to the vertex sets $\mu(B_1),\mu(B_2),\ldots,\mu(B_k)$, respectively, and the leaves of every $T_{\mu(B_i)}$ can be identified as a permutation $\bar{\sigma}_i$ which is $\sigma$ restricted to a subset of $[n]$. Furthermore, suppose the leftmost leaf of $T_{\mu(B_i)}$ is $(a_i,b_i)$ for $i\le k$ and $a_1,a_2,\ldots,a_k$ is strictly increasing. Then by applying the bijection ${\sf q}$ given in Lemma~\ref{L:bicm} on every $B_i$, we have the corresponding pyramid ${\sf q}(B_i)$ such that the unique maximal element is $(a_i,b_i)$ and all the elements of ${\sf q}(B_i)$ are exactly the paths of a permutation $\bar{\sigma}_i$. For every $i,j$ such that $1\le i\le k$ and $j>i$, and every element $x$ in the pyramid ${\sf q}(B_i)$, suppose $y_{x}$ is one of the minimal elements in the pyramid ${\sf q}(B_j)$ that is crossing with $x$ but not crossing with any $z$ such that $x\le z$ in the pyramid ${\sf q}(B_i)$. Then the heap
${\sf s}(A)$ is obtained by letting $y_{x}$ cover $x$ for every $x$ and $y_x$.

We next prove that the map $A\mapsto {\sf s}(A)$ is a bijection by showing its inverse map.
For a given heap $H_n^*$ in $\CMcal{H}(\CMcal{B}_3,\CMcal{R}_3)$ such that $H_n^*$ has $n$ elements which are exactly the paths of a permutation $\sigma$ in $\mathfrak{S}_n$, namely the elements of $H_n^*$ are $(1,\sigma(1)),\ldots,(n,\sigma(n))$, we consider the pyramid $P_1$ induced by the element $(1,\sigma(1))$, i.e., $P_1$ contains all the elements $y$ in $H_n^*$ such that $y\le (1,\sigma(1))$. From the remaining heap $H_n^*-P_1$, we choose an element $(c_1,d_1)$ such that $c_1$ is minimal of all the elements contained in $H_n^*-P_1$. Let $P_2$ be a pyramid induced by the element $(c_1,d_1)$, i.e., $P_2$ contains all the elements $y$ in $H_n^*-P_1$ such that $y\le (c_1,d_1)$. We continue this process until no element is left and we decompose $H_n^*$ into a sequence of pyramids $P_1,P_2,\ldots,P_k$ whose unique maximal elements are respectively $(1,\sigma(1)),(c_1,d_1),\ldots, (c_{k-1},d_{k-1})$ such that $1,c_1,\ldots,c_{k-1}$ is strictly increasing. In fact, $P_i={\sf q}(B_i)$. For any two crossing paths $(i,m_i),(j,m_j)$ such that $(i,m_i)\in P_s$ and $(j,m_j)\in P_t$, we have $(i,m_i)\le (j,m_j)$ if $s<t$. By applying the inverse bijection ${\sf q}^{-1}$ given in
Lemma~\ref{L:bicm} on the pyramids $P_1,P_2,\ldots,P_k$, we retrieve a
forest of binary trees $T_{\mu(B_1)},T_{\mu(B_2)},\ldots,T_{\mu(B_k)}$ whose vertex
set is a complete non-ambiguous forest, from which it follows that
$A\mapsto {\sf s}(A)$ is a bijection.

See Figure~\ref{F:8} for an example of the bijection ${\sf s}$.
\begin{figure}[htbp]
\begin{center}
\includegraphics[scale=0.8]{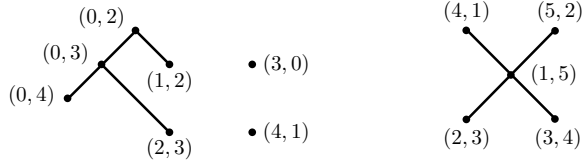}
\caption{A complete non-ambiguous forest with $5$ leaves (left) and its corresponding heap $H_5^*$ whose elements are exactly the paths of the permutation $\sigma=53412$ (right). Furthermore, the heap $H_5^*$ uniquely corresponds with the pair $(54132,21543)$ of permutations with no common rise.
\label{F:8}}
\end{center}
\end{figure}

In the next step, we will show that $h_{n,2}=\omega(n)$ by constructing a
bijection between the set of pairs $(\pi,\xi)$ of permutations of $[n]$ with no common rise and the set of heaps $H_n^*$ in $\CMcal{H}(\CMcal{B}_3,\CMcal{R}_3)$ such that $H_n^*$ has $n$ elements which are exactly the paths of a permutation $\sigma\in\mathfrak{S}_n$.

For a given pair $(\pi,\xi)$ of
permutations $\pi,\xi\in\mathfrak{S}_n$ without common rise, suppose
$\pi=a_1a_2\cdots a_n$ and $\xi=b_1b_2\cdots b_n$. Then, we choose a
permutation $\sigma\in\mathfrak{S}_n$ satisfying $\sigma(a_i)=b_i$
for every $i$. Let $(a_1,b_1),\ldots,(a_n,b_n)$ be the elements of the corresponding heap $H_n^*$, then it remains to define the order $\le $ in the heap $H_n^*$.
For every two crossing paths
$(a_i,b_i),(a_j,b_j)$, we set $(a_j,b_j)<(a_i,b_i)$ if $i<j$. Thus, we have given the map $(\pi,\xi)\mapsto H_n^*$.

We next prove this map is a bijection by showing its inverse map. Given a heap
$H_n^*$ in $\CMcal{H}(\CMcal{B}_3,\CMcal{R}_3)$ such that $H_n^*$ has $n$ elements which are exactly the paths of a permutation $\sigma\in \mathfrak{S}_n$, for any two paths
$(a_i,b_i),(a_j,b_j)$, we define $(a_i,b_i)\prec(a_j,b_j)$ if $(a_j,b_j)<(a_i,b_i)$ in the heap $H_n^*$. Otherwise, we define $(a_j,b_j)\prec(a_i,b_i)$ if $a_i<a_j$ and $(a_i,b_i)$, $(a_j,b_j)$ are incomparable in the heap $H_n^*$. In this way, we give a total order $\prec$ of the paths $(a_i,b_i)$ for all $i$ which allows us to rewrite the elements $(a_1,b_1),(a_2,b_2),\ldots,(a_n,b_n)$ by this linear order $\prec$. That is, suppose the sequence
$(a_{i_1},b_{i_1}),(a_{i_2},b_{i_2}),\ldots,(a_{i_n},b_{i_n})$ satisfies
$(a_{i_1},b_{i_1})\prec(a_{i_2},b_{i_2})\prec\cdots\prec(a_{i_n},b_{i_n})$, then
we choose $\pi=a_{i_1}a_{i_2}\cdots a_{i_n}\in\mathfrak{S}_n$ and
$\xi=b_{i_1}b_{i_2}\cdots b_{i_n}\in\mathfrak{S}_n$. There is no
common rise of the pair $(\pi,\xi)$. If there exists $i_j$ such
that $a_{i_j}<a_{i_{j+1}}$ and $b_{i_j}<b_{i_{j+1}}$, then the paths
$(a_{i_j},b_{i_j})$ and $(a_{i_{j+1}},b_{i_{j+1}})$ are not crossing, so they have no
cover relation in the heap $H_n^*$. It follows that $(a_{i_j},b_{i_j})$ and $(a_{i_{j+1}},b_{i_{j+1}})$ are incomparable in the heap $H_n^*$ and, in view of $a_{i_j}<a_{i_{j+1}}$, we have $(a_{i_{j+1}},b_{i_{j+1}})\prec (a_{i_{j}},b_{i_{j}})$ which contradicts the assumption that $(a_{i_j},b_{i_j})\prec(a_{i_{j+1}},b_{i_{j+1}})$.
Consequently, the pair $(\pi,\xi)$ has no common rise and the map $(\pi,\xi)\mapsto H_n^*$ is a bijection.
\end{proof}
See Figure~\ref{F:8} for an example of this bijection.

\section*{Acknowledgement}
The author thanks two anonymous reviewers for their very helpful suggestions and comments. The author also thanks Jiang Zeng, Zhicong Lin, Ren\'{e} Ciak and Laura Silverstein for related discussions and would like to give special thanks to the joint seminar Arbeitsgemeinschaft Diskrete Mathematik for their valuable feedback. This work was done during my stay in the AG Algorithm and Complexity, Technische Universit\"{a}t Kaiserslautern, Germany. The author would like to thank Markus Nebel, Sebastian Wild and Raphael Reitzig for their kind help and support.

\end{document}